\theoremstyle{plain}
\newtheorem{thm}{Theorem}[section] 
\newtheorem{theorem}[thm]{Theorem}
\newtheorem{lemma}[thm]{Lemma}
\newtheorem{corollary}[thm]{Corollary}
\newtheorem{prop}[thm]{Proposition}
\newtheorem{proposition}[thm]{Proposition}
\newtheorem*{claim*}{Claim} 
\newtheorem*{thm*}{Theorem}
\theoremstyle{definition}
\newtheorem{notation}[thm]{Notation}
\theoremstyle{remark}
\newtheorem{remark}[thm]{Remark}
\newcommand{\codim}{\operatorname{codim}}
\newcommand{\rank}{\operatorname{rank}}
\newcommand{\Gl}{\operatorname{GL}}
\newcommand{\Mat}{\operatorname{Mat}}
\newcommand{\Span}{\operatorname{Span}}
\newcommand{\OG}{\operatorname{O}}
\newcommand{\Og}{\OG}
\newcommand{\PO}{\operatorname{PO}}
\newcommand{\PSp}{\operatorname{PSp}}
\newcommand{\m}{\mathfrak{m}}
\newcommand{\A}{\mathbb{A}}
\newcommand{\sh}[1]{\mathcal{#1}}
\newcommand{\GL}{\Gl}
\newcommand{\SO}{\operatorname{SO}}
\newcommand{\End}{\operatorname{End}}
\newcommand{\isom}{\cong}
\newcommand{\iso}{\isom}
\newcommand{\sm}{\setminus}
\newcommand{\Z}{\mathbb{Z}}
\newcommand{\ZZ}{\Z}
\renewcommand{\1}{{\rm 1\hspace*{-0.4ex}%
\rule{0.1ex}{1.52ex}\hspace*{0.2ex}}}
\newcommand{\Sp}{{\mathrm{Sp}}}
\newcommand{\CH}{{\operatorname{CH}}}
\newcommand{\Gr}{\mathrm{Gr}}
\newcommand{\<}[1]{{\langle}#1 {\rangle}}
\newcommand{\Tr}{\operatorname{Tr}}
\newcommand{\Iso}{\mathrm{Iso}}
\renewcommand{\m}[1]{\begin{bmatrix} #1 \end{bmatrix}}
\newcommand{\mat}[1]{\mathrm{Mat}_{#1\times#1}(k)}
\def\bbordermatrix#1{\begingroup \m@th
  \@tempdima 4.75\p@
  \setbox\z@\vbox{%
    \def\cr{\crcr\noalign{\kern2\p@\global\let\cr\endline}}%
    \ialign{$##$\hfil\kern2\p@\kern\@tempdima&\thinspace\hfil$##$\hfil
      &&\quad\hfil$##$\hfil\crcr
      \omit\strut\hfil\crcr\noalign{\kern-\baselineskip}%
      #1\crcr\omit\strut\cr}}%
  \setbox\tw@\vbox{\unvcopy\z@\global\setbox\@ne\lastbox}%
  \setbox\tw@\hbox{\unhbox\@ne\unskip\global\setbox\@ne\lastbox}%
  \setbox\tw@\hbox{$\kern\wd\@ne\kern-\@tempdima\left[\kern-\wd\@ne
    \global\setbox\@ne\vbox{\box\@ne\kern2\p@}%
    \vcenter{\kern-\ht\@ne\unvbox\z@\kern-\baselineskip}\,\right]$}%
  \null\;\vbox{\kern\ht\@ne\box\tw@}\endgroup}
\begin{document}

\title{Spaces of generators for matrix algebras with involution}
\author{Taeuk Nam}
\address{Taeuk Nam, Department of Mathematics, Harvard University, Cambridge~MA 02138, USA.}
\email{tnam@math.harvard.edu}
\author{Cindy Tan}
\address{Cindy Tan, Department of Mathematics, University of Chicago, Chicago, IL 60637, USA.}
\email{cindy@math.uchicago.edu}
\author{Ben Williams} 
\address{Ben Williams, Department of Mathematics, University of British Columbia, Vancouver~BC V6T 1Z2, Canada.}
\email{tbjw@math.ubc.ca}

\subjclass[2020]{15A63, 16W10}

\begin{abstract}
  Let $k$ be an algebraically closed field of characteristic different from $2$. Up to isomorphism, the algebra $\Mat_{n \times n}(k)$ can be endowed with a
  $k$-linear involution in one way if $n$ is odd and in two ways if $n$ is even. 
  
  In this paper, we consider $r$-tuples $A_\bullet \in \Mat_{n\times n}(k)^r$ such that the entries of $A_\bullet$ fail
  to generate $\Mat_{n\times n}(k)$ as an algebra with involution. We show that the locus of such $r$-tuples forms a 
  closed subvariety $Z(r;V)$ of $\Mat_{n\times n}(k)^r$ that is not irreducible. We describe the
  irreducible components and we calculate the dimension of the largest component of $Z(r;V)$ in all cases. This gives a
  numerical answer to the question of how generic it is for an $r$-tuple $(a_1, \dots, a_r)$ of elements in
  $\Mat_{n\times n}(k)$ to generate it as an algebra with involution.
\end{abstract}
\maketitle

\ifdraft{\tableofcontents}

\section{Introduction}

let $k$ be a field and let $n$ be a positive integer.
One may ask whether an $r$-tuple of matrices $(A_1, \dots, A_r)$ generates the matrix algebra $\Mat_{n \times n}(k)$
i.e., whether all matrices $B \in \Mat_{n \times n}(k)$ may be written as noncommutative polynomials in the $A_i$. This
question is a basic one in the study of matrix algebras. The landmark result in this area is a theorem of Burnside,
which says that over an algebraically closed field $A_1, \dots, A_r$ generate the algebra unless they share a nontrivial
invariant subspace: see e.g., \cite{Lam1998}, \cite{LomonosovsimplestproofBurnside2004}. From this it follows that a
generic pair of matrices $(A_1, A_2)$ generate the matrix algebra. Detecting when a given $r$-tuple generates the
algebra has been addressed in \cite{Laffey1981}, \cite{ShemeshCommoneigenvectorstwo1984}, \cite{AslaksenGeneratorsmatrixalgebras2009}, and
the problem of determining irredundant generating sets been considered by Laffey
\cite{LaffeyIrredundantgeneratingsets1983}. Recently, in \cite{first2020b}, it has proved useful to estimate ``how
generic'' a condition it is for an $r$-tuple to generate $\Mat_{n \times n}(k)$. That is, the set $Z$ of $r$-tuples that
fail to generate is a Zariski-closed subset of $\Mat_{n \times n}(k)^r$, and one wishes to know how large the
codimension of $Z$ in $\Mat_{n \times n}(k)^r$ is. The calculation has implications, as explained in \cite{first2020b},
for the number of generators required for forms of matrix algebras, the Azumaya algebras.

Let $k$ be an algebraically closed field of characteristic different from $2$.  In this paper, we consider the case of
$\Mat_{n \times n}(k)$ equipped with a $k$-linear involution $\rho$. The most important example of a $k$-linear
involution is the transpose operation, and if $n$ is odd then all involutions are isomorphic to this one. If $n$ is
even, then symplectic involutions also arise. An $r$-tuple $(A_1, \dots, A_r)$ generates the matrix algebra with
involution if all matrices $B \in \Mat_{n \times n}(k)$ may be written as a noncommutative polynomial in the $A_i$ and
the $\rho(A_i)$.

The structure $\Mat_{n \times n}(k)$ with transposition is of fundamental importance, and we start to describe the
geometry of the space of generating $r$-tuples of this structure. We are also motivated by applications in the vein of
\cite{first2020b}, bounding the number of generators required for Azumaya algebras with involution. Involutions on
Azumaya algebras, including central simple algebras, are of substantial interest, as attested by
\cite{SaltmanAzumayaalgebrasinvolution1978}, \cite{M.-AKnus1990}, \cite{Knusbookinvolutions1998}, \cite{Auel2019}. As is
evident from \cite{Knusbookinvolutions1998}, involutions of central simple algebras are not only studied for their own
sake, but also due to their close relationship with the study of linear algebraic groups. At the end of this
introduction, we outline an application of the present paper to the groups $\PO(n)$ and $\PSp(n)$.

\medskip

Our attention in this paper is devoted to the closed variety $Z(r;V)$ of $r$-tuples of matrices that fail to generate
$\Mat_{n \times n}(k)$ as an algebra with involution. The dimension of the largest irreducible component of $Z(r;V)$ is
a measure of ``how generic'' the condition of being generating is. It is also the key input into the machinery of
\cite[Section 9.2]{first2020b}, which provides estimates on how many elements suffice to generate a general Azumaya
algebra with involution.  We show in Corollaries \ref{cor:SymmDenseOpen} and \ref{cor:SkewSymmDenseOpen} that even a
generic singleton $A \in \Mat_{n \times n}(k)$ generates the matrix algebra with involution unless $n=2$ and the
involution is symplectic.

Burnside's theorem tells us that an $r$-tuple of matrices $(A_1, \dots, A_r)$ fails to generate
$\Mat_{n \times n}(k)$ as an algebra with involution if and only if $A_1, \dots, A_r, \rho(A_1), \dots , \rho(A_r)$ have
a common invariant subspace $W$ of dimension $d \in \{1, \dots, n-1\}$. We classify $r$-tuples that fail to
generate based on their invariant subspaces. The notation $Z(l,d,r;V)$ denotes the variety of $r$-tuples that have a
common invariant $d$-dimensional subspace where $\dim_k(W \cap W^\perp) = l$. When $l=d$,
the subspace is totally isotropic and  when $l=0$ the subspace is anisotropic. 
One of our main theorems,
Theorem \ref{th:closedCoverOfZsymm}, says that the varieties $Z(d,d,r;V)$ and $\overline{ Z(0,d,r; V)}$ as $d$ varies
account for all the irreducible components of $Z(r;V)$. Our other main theorem, Theorem \ref{th:mainDimCalc}, calculates
the dimensions of the varieties $Z(l,d,r;V)$ for all $l$ and $d$.

With these theorems in hand, we determine for all values of $r$ and $n$ what the largest-dimensional irreducible
components of $Z(r;V)$ are. The calculations are given in Proposition \ref{pr:dimSymmNot4} for the symmetric case and
Proposition \ref{pr:dimSkew-Symmetric} in the skew-symmetric case.

\medskip

Knowledge of the dimensions of the components of $Z(r; V)$ is of relevance to calculations about the automorphism
group $PG$ of $\Mat_{n \times n}(k)$ as an algebra with involution, as we now explain. There are two possibilities for $PG$: either the
involution is orthogonal, in
which case the automorphism group is $\PO(n;k)$, or it is symplectic, in which case it is $\PSp(n; k)$.

Since $\Mat_{n \times n}(k)$ is a generically free $PG$-representation, it can be used to produce an approximation to the classifying stack $BPG$
in the sense of \cite{TotaroChowringclassifying1999}. This approximation is obtained by first discarding a closed
subvariety including all the points where $PG$ does not act freely, in our case $Z(r;V)$, and then taking the quotient
by $PG$.

The codimension of $Z(r; V)$ in $\Mat_{n \times n}(k)$ influences how close the approximation
\[f_r : \frac{\Mat_{n \times n}(k) \sm Z(r;V)}{PG} \to BPG\] is. For instance, the induced map on Chow groups
$f_r^*: \CH^i_{PG} \to \CH^i(U(r; V)/PG)$ is an isomorphism when $i < rn^2- \dim_k Z(r; V)$, and we have calculated
$\dim_k Z(r; V)$ in this paper.


\subsection{Structure of the paper}
The paper begins with an expository section, section \ref{sec:Preliminary}, setting up notation and establishing a
decomposition result, Lemma \ref{decomposition}, that is used throughout the paper. Subsection \ref{ss:Grass} contains a
calculation of the dimension of Grassmannians of subspaces meeting certain isotropy conditions, which is not readily
available in the literature.

In section \ref{sec:GeneratingTuples}, we define $Z(r;V)$, the variety of nongenerating $r$-tuples in
$\End_k(V)$.  Burnside's theorem says that a set of endomorphisms
$\{A_1, \dots, A_r\}$ of $V$ generate the full endomorphism algebra if and only if there is no nonzero proper subspace
$W \subset V$ such that $A_i W \subseteq W$ for all $i$. A decomposition of $Z(r;V)$ into subsets
$Z(l,d,r; V)$ is given by using the dimension of common invariant subspaces $W$, denoted $d$, and the dimensions of
$W \cap W^\perp$, denoted $l$. Proposition \ref{pr:inclusionInZll} tells us that the union of all components
$Z(d,d,r;V)$ and $\overline{Z(0,d,r;V)}$ forms a closed cover of $Z(r;V)$. Subsection \ref{ss:rfewInv} is devoted to
showing that $r$-tuples exist in $Z(l,d,r;V)$ that are sufficiently generic, and then subsection
\ref{ss:irreducibleCompsOfZ(ldrV)} shows that in most cases, $Z(l,d,r;V)$ are irreducible varieties, the exception being
when the bilinear form on $V$ is symmetric and the invariant subspaces are isotropic of dimension one-half that of $V$.

In section \ref{sec:DimensionCalculation}, we calculate the dimensions of the varieties $Z(l,d,r; V)$, and therefore the
dimensions of the various components of $Z(r;V)$. Using the existence result of subsection \ref{ss:rfewInv} and the
dimension calculation, we rule out any unexpected containments among these closed
varieties. This allows us to establish Theorem \ref{th:closedCoverOfZsymm}, describing the irreducible components of
$Z(r;V)$ in every case. The rest of the paper, with a view to the needs of \cite{first2020b}, is devoted to the calculation of which among the irreducible components of
$Z(r;V)$ has the largest dimension. This calculation is done by elementary methods in Propositions \ref{pr:dimSymmNot4}
and \ref{pr:dimSkew-Symmetric}.

\subsection{Notational conventions} \label{sec:notConv}

Throughout:
\begin{itemize}[label={--}]
\item $k$ is an algebraically closed field of characteristic different from $2$.
\item $n$ and $r$ are positive integers.
\item $V$ denotes $k^n$ equipped with a nondegenerate bilinear form $\<{\cdot, \cdot}$ that is either symmetric or
skew-symmetric.
\item $V^*$ is the dual space, and if $f$ is a linear map, $f^*$ denotes the dual map.
\item $\phi$ is the induced isomorphism $\phi: V \to V^\ast$ given by $\phi(v)(w) = \<{v,w}$.
\item If $W \subseteq V$, then $W^\perp = \{ x \in V \mid \<{w,x} = 0,\, \forall w \in W \}$.
\item $G$ denotes the group of invertible endomorphisms of $V$ that preserve the bilinear form. This is isomorphic either to the (split) orthogonal group $O(n;k)$ or the symplectic group $\Sp(n;k)$
  over $k$, depending on whether $\<{\cdot, \cdot}$ is symmetric or skew-symmetric.
\item $PG$ denotes the adjoint group associated to $G$, i.e., the quotient of $G$ by its centre $Z(G)$.
\item $\bullet$ is used to indicate an $r$-tuple, so that $A_\bullet = (A_1, \dots, A_r)$.
\item $\delta_{i,j}$ denotes the \emph{Kronecker delta}, which is equal to $1$ if $i = j$ and equal to $0$ otherwise.
 \item $\Omega_{2m}$ denotes a matrix given by:
\[ \Omega_2 = \m{0&-1\\1&0}, \quad \text{and} \quad \Omega_{2m} = \m{
	\Omega_2 & & 0 \\
 & \ddots & \\
 0 & & \Omega_2}\]
\item A \textit{variety} means a reduced separated finite type $k$-scheme, not necessarily connected or
  irreducible. Varieties are endowed with the Zariski topology.
\item The \textit{dimension} of a variety is the maximum of the dimensions
  of all the irreducible components
\item The \textit{codimension} of $Z$ in $V$, which is only ever considered when $V$ is equidimensional, is
  the minimum of the codimensions of the irreducible components of $Z$.
\end{itemize}

\subsection*{Background on bilinear spaces and involutions} \label{ss:BSandInv}

Recall from
\cite[Introduction]{Knusbookinvolutions1998} that an \textit{involution} of $\Mat_{n \times n}(k)$ is an additive map
$\rho : \Mat_{n \times n}(k) \to \Mat_{n \times n}(k)$ such that
\[ \rho(AB) = \rho(B) \rho(A) \quad \forall A, B \in \Mat_{n \times n}(k). \]
We consider only $k$-linear involutions, which are called ``involutions of the first kind'' in
\cite{Knusbookinvolutions1998}.

\begin{notation} \label{not:adjointInv} Recall that $V$ denotes $k^n$ equipped with a particular bilinear form
  $\phi : V \to V^*$. Let $\rho$ denote the adjoint involution on $\End_k(V)$, given by
  $\rho(A) = \phi^{-1}\circ A^* \circ \phi$.
\end{notation}

Every $k$-linear involution of $\Mat_{n \times n}(k)$ is obtained from a nondegenerate symmetric or skew-symmetric bilinear form on
$k^n$ by this construction: see \cite[p.~1]{Knusbookinvolutions1998}. We write $\End_k(V)$
for the algebra-with-involution $\Mat_{n \times n}(k)$ endowed with the involution associated to $\<{\cdot, \cdot}$.

Two such algebras with involution $\End_k(V)$ are isomorphic if and only if the bilinear forms differ by a similitude,
\cite[Proposition 12.34]{Knusbookinvolutions1998}. The field $k$ is algebraically closed, however, so up to similitude,
there is a unique symmetric bilinear form on $k^n$, and there is a unique skew-symmetric bilinear form if and only if
$n$ is even. Therefore, up to isomorphism there are one or two $k$-linear involutions on $\Mat_{n \times n}(k)$,
depending on the parity of $n$.

If the bilinear form is symmetric, then the involution $\rho$ is said to be \textit{orthogonal}. This case includes the
involution $\rho(A) = A^\mathrm{T}$. If the bilinear form is skew-symmetric, then $\rho$ is \textit{symplectic}.

\section{Subspaces of Bilinear Spaces} \label{sec:Preliminary}

Everything in this section is well known, and we make no claim to originality. 
We include this section to provide
convenient reference for later sections.

\subsection{Preliminary results}

The following omnibus lemma collects a number of useful facts about $\perp$. All proofs are elementary.
\begin{lemma}
  For all subspaces $W_1, W_2$ of $V$, the following identities hold.
  \begin{enumerate}
  \item $\dim W_1 + \dim W_1^\perp = n$;
  \item $(W_1 + W_2)^\perp = W_1^\perp \cap W_2^\perp$;
  \item $(W_1 \cap W_2)^\perp = W_1^\perp + W_2^\perp$;
  \item $(W_1^\perp)^\perp = W_1$;
  \end{enumerate}
  as well as the relation
  \begin{enumerate}
    \setcounter{enumi}{4}
  \item $W_1 \subseteq W_2$ if and only if $W_1^\perp \supseteq W_2^\perp$.
  \end{enumerate}
\end{lemma}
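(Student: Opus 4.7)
The proof is routine linear algebra, so the plan is simply to organize the five statements so that each is reduced to the minimum amount of work. The one structural fact that makes everything run is nondegeneracy of $\<{\cdot,\cdot}$: the map $\phi\colon V \to V^*$ defined in Notation \ref{not:adjointInv} is an isomorphism, and under it $W^\perp$ corresponds to the annihilator $W^\circ = \{f \in V^* \mid f|_W = 0\}$.

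First I would prove (1). Restricting functionals to $W_1$ gives a surjection $V^* \twoheadrightarrow W_1^*$, and precomposing with the isomorphism $\phi$ gives a surjection $V \twoheadrightarrow W_1^*$ whose kernel is exactly $W_1^\perp$. The rank-nullity theorem then yields $\dim W_1^\perp = n - \dim W_1$. Next (2) is immediate from the definition: $v$ annihilates every element of $W_1 + W_2$ iff it annihilates every element of $W_1$ and every element of $W_2$. Then (4) follows by combining (1) with the obvious inclusion $W_1 \subseteq (W_1^\perp)^\perp$: both sides have the same dimension $\dim W_1$.

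Having (2) and (4), I would deduce (3) formally: apply (2) to the pair $W_1^\perp, W_2^\perp$ to get $(W_1^\perp + W_2^\perp)^\perp = (W_1^\perp)^\perp \cap (W_2^\perp)^\perp = W_1 \cap W_2$, where the second equality uses (4). Taking $\perp$ of both sides and applying (4) once more gives $W_1^\perp + W_2^\perp = (W_1 \cap W_2)^\perp$. Finally, (5) is a two-line check: the forward direction is immediate from the definition of $\perp$, and the reverse direction is obtained by taking $\perp$ of both sides of $W_1^\perp \supseteq W_2^\perp$ and applying (4).

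There is no real obstacle here; the only point that uses anything beyond formal manipulation is (1), which is where nondegeneracy of the form enters in an essential way. Every other part either is definitional or is a formal consequence of (1), (2), and (4). Since the lemma is explicitly labelled as well-known, I would keep the write-up terse and present the five items in the order (1), (2), (4), (3), (5) described above rather than in the order stated.
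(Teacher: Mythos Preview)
Your proposal is correct and is precisely the standard elementary argument the paper has in mind; in fact the paper gives no proof at all beyond the remark ``All proofs are elementary,'' so your sketch is already more detailed than what appears there.
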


\begin{notation} \label{not:IsoRank}
 Let $W\subseteq V$ be a subspace. Write $\Iso(W) = W \cap W^\perp$. We call this the \textit{isotropic radical} of
 $W$. We call $\dim_k \Iso(W)$ the \textit{isotropy rank} of $W$.
 
 We say that a subspace $W \subseteq V$ is \textit{nondegenerate} if the restriction of $\<{\cdot, \cdot}$ to $W$ is
 nondegenerate, i.e., if $\Iso(W) = \{0\}$.
\end{notation}



\begin{lemma} \label{lem:IsoComplement}
 Suppose $W \subseteq V$ is a totally isotropic subspace, i.e., $W = \Iso(W)$. Then there exists a totally isotropic
 subspace $C$ such that $\<{ \cdot, \cdot} $ induces a perfect pairing $W \times C \to k$.
\end{lemma}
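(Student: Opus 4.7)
The plan is to prove this by induction on $d = \dim_k W$.

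The base case $d = 0$ is trivial (take $C = 0$). For the inductive step, pick any nonzero $w_1 \in W$. By nondegeneracy of $\langle \cdot, \cdot \rangle$ on $V$, there exists $y \in V$ with $\langle w_1, y \rangle = 1$. I would then produce an isotropic partner $c_1 \in V$ for $w_1$. In the symplectic case every vector is isotropic, so take $c_1 = y$. In the orthogonal case, set $c_1 = y - \tfrac{1}{2}\langle y, y\rangle w_1$, which is well-defined since $\mathrm{char}\,k \neq 2$; a direct computation (using $\langle w_1, w_1\rangle = 0$) gives $\langle c_1, c_1\rangle = \langle y, y\rangle - 2 \cdot \tfrac{1}{2}\langle y,y\rangle = 0$ and $\langle w_1, c_1 \rangle = 1$.

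The key geometric step is to recognize that $H := \mathrm{span}\{w_1, c_1\}$ is a hyperbolic plane: the Gram matrix of $\langle \cdot,\cdot\rangle|_H$ is $\bigl(\begin{smallmatrix} 0 & 1 \\ \pm 1 & 0\end{smallmatrix}\bigr)$, which is nondegenerate. By an elementary fact for nondegenerate subspaces we get an orthogonal decomposition $V = H \oplus H^\perp$, with $\langle \cdot,\cdot\rangle|_{H^\perp}$ nondegenerate of the same type (symmetric or skew-symmetric) as on $V$. Set $W' = W \cap H^\perp$. For $w \in W$ we have $w \in H^\perp$ iff $\langle w, w_1\rangle = 0 = \langle w, c_1\rangle$; the first holds automatically since $W$ is totally isotropic, so $W'$ is the kernel of the nonzero functional $w \mapsto \langle w, c_1\rangle$ on $W$ (nonzero because $\langle w_1, c_1\rangle = 1$). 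Hence $W'$ is a hyperplane in $W$ with $W = kw_1 \oplus W'$, and $W'$ is a totally isotropic subspace of $H^\perp$ of dimension $d - 1$.

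By the inductive hypothesis applied inside the bilinear space $H^\perp$, there exists a totally isotropic $C' \subseteq H^\perp$ so that $W' \times C' \to k$ is a perfect pairing. I would then set $C = kc_1 \oplus C'$ and verify the three required properties: $C$ is totally isotropic because $c_1$ is isotropic, $C'$ is totally isotropic, and $\langle c_1, C'\rangle = 0$ as $c_1 \in H$ and $C' \subseteq H^\perp$; and the pairing $W \times C \to k$ is block-diagonal in the decompositions $W = kw_1 \oplus W'$ and $C = kc_1 \oplus C'$ (cross terms vanish because $w_1 \in H \perp C'$ and $c_1 \in H \perp W'$), with diagonal blocks $\langle w_1, c_1\rangle = 1$ and the perfect pairing $W' \times C' \to k$ from induction. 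The main obstacle is really just the production of $c_1$ in the orthogonal case, which is where $\mathrm{char}\, k \neq 2$ is used; everything else is bookkeeping with the orthogonal decomposition.
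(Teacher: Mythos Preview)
Your argument is correct and uses the same core idea as the paper: induct on $\dim W$ and, at each step, correct a partner vector by subtracting a multiple of an element of $W$ (using $\operatorname{char} k\neq 2$) to force isotropy. The only difference is organizational: the paper fixes a basis $\{b_1,\dots,b_d\}$ of $W$ at the outset and builds the dual isotropic basis $\{c_1,\dots,c_d\}$ one vector at a time inside the same ambient $V$, whereas you peel off a hyperbolic plane $H=\langle w_1,c_1\rangle$ and recurse in the nondegenerate complement $H^\perp$. Your packaging via the orthogonal splitting $V=H\oplus H^\perp$ makes the bookkeeping of the cross terms slightly cleaner, while the paper's version avoids invoking the splitting lemma for nondegenerate subspaces; neither buys anything substantive over the other.
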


 Observe that the conclusion of the lemma implies that $W \cap C = \{0\}$. 
\begin{proof} 
 Choose a basis $\{ b_1, \dots, b_d \}$ of $W$. By induction, we produce a basis $\{c_1, \dots, c_d\}$ for $C$ such that $\<{b_i,
 c_j} = \delta_{ij}$ and such that $\<{c_i , c_j} = 0$ for all $i, j$. Suppose
 $\{ c_1, \dots, c_i\}$ has already been produced for some $i\ge 0$.

 The elements $\{ b_1, \dots, b_d, c_1, \dots, c_i\}$ form a linearly independent set, therefore so too do the dual
 elements $\{ \phi(b_1), \dots, \phi(b_d), \phi(c_1), \dots, \phi(c_i)\}$. We can find some
 $c_{i+1}' \in V$ such that
 \begin{align}
 \label{eq:5}
 \<{b_j, c_{i+1}'} &= \delta_{j, i+1}\text{ for all }j \in \{1, \dots, d\} \\
 \label{eq:5b} \<{c_j, c_{i+1}'} &= 0\text{ for all }j \in \{1, \dots, i\}
 \end{align}
 Set $\<{ c_{i+1}' , c_{i+1}' } = 2\lambda$. Let $c_{i+1} = c_{i+1}' - \lambda b_{i+1}$. Observe that $c_{i+1}$ also
 satisfies equations \eqref{eq:5} and \eqref{eq:5b} and $\<{c_{i+1}, c_{i+1}} = 0$.

 We define $C$ to be the subspace generated by $\{c_1, \dots, c_d\}$. This space has the required properties, as is
 easily verified.
\end{proof}

\subsection{Decomposition of bilinear spaces}

Now we show that any subspace $W \subseteq V$ induces a decomposition of $V$. We will use this in order to
stratify the Grassmannians $\Gr(d,V)$ of $d$-planes in $V$ according to the isotropy rank of the subspaces. The main
result here is Proposition \ref{lem:nice-basis}, which allows us to write any inclusion of a subspace $W \subseteq V$ in
a standard form. 

\begin{notation}
 Throughout the rest of this section, fix a subspace $W$ of $V$ and let $W_a$ and $W_b$ be subspaces of $V$ such that:
 \begin{align*}
 \Iso(W) \oplus W_a &= W, \\
 \Iso(W) \oplus W_b &= W^{\perp}.
 \end{align*}
\end{notation}

\begin{lemma}\label{decomposition}
  There exists a subspace $C$ of $V$ such that:
  \begin{itemize}
  \item  $\Iso(W) \oplus W_a \oplus W_b \oplus C = V$;
  \item The four subspaces $\Iso(W)$, $W_a$, $W_b$ and $C$ are pairwise orthogonal with the exception of the pair
    $\Iso(W)$ and $C$;
  \item $C$ is totally isotropic and $\<{\cdot, \cdot}$ induces a perfect pairing $\Iso(W) \times C \to k$.
  \end{itemize}
\end{lemma}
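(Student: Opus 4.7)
The plan is to first combine $\Iso(W)$, $W_a$, and $W_b$ into a subspace of dimension $n-\dim\Iso(W)$, then construct $C$ as a totally isotropic complement of $\Iso(W)$ inside the orthogonal complement of the nondegenerate piece, using Lemma \ref{lem:IsoComplement}.

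First I would verify that the form restricted to $U := W_a \oplus W_b$ is nondegenerate. The restriction of $\langle\cdot,\cdot\rangle$ to $W$ has radical equal to $W\cap W^\perp = \Iso(W)$, so the restriction to the complement $W_a$ is nondegenerate; similarly the restriction to $W_b$ is nondegenerate inside $W^\perp$. Moreover $W_a \perp W_b$ since $W_a \subseteq W$ and $W_b \subseteq W^\perp$, so $U = W_a \oplus W_b$ is an orthogonal direct sum of nondegenerate subspaces and is thus itself nondegenerate. A short direct check (if $v+a+b=0$ with $v\in\Iso(W)$, $a\in W_a$, $b\in W_b$, then $b\in W\cap W^\perp$ so $b=0$, etc.) shows that $\Iso(W)\oplus W_a\oplus W_b$ is a genuine direct sum, equal to $W+W^\perp$, of dimension $n-\dim\Iso(W)$.

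Since $U$ is nondegenerate, the standard orthogonal decomposition gives $V = U \oplus U^\perp$ with the form on $U^\perp$ again nondegenerate. Because $\Iso(W)$ is orthogonal to both $W_a \subseteq W$ and $W_b \subseteq W^\perp$, it lies in $U^\perp$, and a dimension count yields $\dim U^\perp = n-(n-2\dim\Iso(W)) = 2\dim\Iso(W)$. Thus $\Iso(W)$ is a maximal totally isotropic subspace of the nondegenerate space $U^\perp$.

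Applying Lemma \ref{lem:IsoComplement} to $\Iso(W)$ inside $U^\perp$ produces a totally isotropic subspace $C\subseteq U^\perp$ such that $\langle\cdot,\cdot\rangle$ restricts to a perfect pairing $\Iso(W)\times C \to k$. In particular $\Iso(W)\cap C = 0$, and by dimensions $U^\perp = \Iso(W)\oplus C$. Combining with $V = U\oplus U^\perp$ yields the required decomposition $V = \Iso(W)\oplus W_a\oplus W_b\oplus C$. The orthogonality conditions follow immediately: $C\subseteq U^\perp$ gives $C\perp W_a$ and $C\perp W_b$; the pairs involving $\Iso(W)$ with $W_a$ or $W_b$ were already established; and $W_a\perp W_b$ was used in constructing $U$. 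The only unrestricted pairing is $\Iso(W)\times C$, which is exactly the exception in the statement. I do not anticipate a serious obstacle here — the main subtlety is simply bookkeeping to confirm that $U$ really is nondegenerate, after which everything reduces to Lemma \ref{lem:IsoComplement}.
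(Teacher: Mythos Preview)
Your proposal is correct and follows essentially the same route as the paper: both arguments verify that $W_a \oplus W_b$ is nondegenerate, pass to its orthogonal complement (which contains $\Iso(W)$), apply Lemma~\ref{lem:IsoComplement} there to produce $C$, and finish with a dimension count. You spell out a few more of the intermediate verifications, but the structure is identical.
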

\begin{proof}
 Observe that $W_a \oplus W_b$ is nondegenerate. As a consequence $(W_a \oplus W_b)^\perp$ is nondegenerate as
 well. Since $\Iso(W) \subseteq (W_a \oplus W_b)^{\perp}$, we can apply Lemma \ref{lem:IsoComplement} to produce
 $C \subseteq (W_a \oplus W_b)^{\perp}$ such that $C$ is totally isotropic and $\<{\cdot, \cdot}$ induces a perfect
 pairing $\Iso(W) \times C \to k$. We remark that $C$ has trivial intersection with $\Iso(W)$, $W_a$ and $W_b$.

 To complete the argument, we count dimensions: The dimension of $W$ is $d$ and that of $\Iso(W)$ is $l$. The dimension
 of $W_a$ is $d-l$ and the dimension of $W^\perp$ is $n-d$. Therefore the dimension of $W_b$ is $n-d-l$. Adding
 together, we see that the dimension of
 \[ \Iso(W) \oplus W_a \oplus W_b \oplus C \]
 is $l + (d-l) + (n-d-l) + l = n$. The result follows.
\end{proof}

For a basis $\mathcal{B} = \{b_i\}_{i=1}^n$ of $V$, let $Q(\mathcal{B})$ be the matrix
$Q(\mathcal{B})_{ij} = \<{b_i,b_j}$. We may abbreviate this to $Q$ if the basis is understood.

\begin{prop}\label{lem:nice-basis}
	 Let $W \subseteq V$ be a subspace with dimension $d$ and isotropy rank $l$. Then there exists an ordered basis $\mathcal{B}$ of $V$ so that $W$ is the span of the first $d$ basis elements and 
	\begin{gather*}
	Q(\mathcal{B}) = \m{0 & 0 & e I_l \\ 0 & M & 0 \\ I_l & 0 & 0},
	\end{gather*}
	where $M = I_{n-2l}$ and $e = 1$ if $\<{\cdot, \cdot}$ is symmetric, and $M = \Omega_{n-2l}$ and $e = -1$ if
 $\<{\cdot, \cdot}$ is skew-symmetric.
\end{prop}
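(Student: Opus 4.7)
The plan is to apply Lemma \ref{decomposition} and then put each summand of the resulting decomposition into standard form. Write $V = \Iso(W) \oplus W_a \oplus W_b \oplus C$ as in that lemma, with $W = \Iso(W) \oplus W_a$, and order the basis so that the first $l$ vectors lie in $\Iso(W)$, the next $d-l$ in $W_a$, the next $n-d-l$ in $W_b$, and the last $l$ in $C$. Then $W$ equals the span of the first $d$ basis vectors automatically. Since the four summands are pairwise orthogonal except for the pair $\Iso(W), C$, and since $\Iso(W)$ and $C$ are each totally isotropic, the Gram matrix of any such ordered basis is zero outside the two diagonal blocks indexed by $W_a \times W_a$ and $W_b \times W_b$ and the two corner blocks $\Iso(W) \times C$ and $C \times \Iso(W)$.

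It remains to choose the basis vectors within each summand so these blocks take the required form. For the corner blocks, pick any basis $\{b_1, \dots, b_l\}$ of $\Iso(W)$ and use the fact that the pairing $\Iso(W) \times C \to k$ is perfect to find the unique dual basis $\{c_1, \dots, c_l\}$ of $C$ satisfying $\<{c_j, b_i} = \delta_{ij}$. With this choice, the lower-left block becomes $I_l$; and then the upper-right block is $I_l$ in the symmetric case and $-I_l$ in the skew-symmetric case, accounting for the factor $e$ in the statement.

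For the middle block, observe that $W_a \oplus W_b$ is a nondegenerate bilinear subspace of dimension $n-2l$ and that the restriction carries a form of the same type (symmetric or skew-symmetric) as on $V$. In the symmetric case, one picks orthonormal bases of $W_a$ and $W_b$ separately; their concatenation has Gram matrix $I_{d-l} \oplus I_{n-d-l} = I_{n-2l}$. In the skew-symmetric case, the restrictions to $W_a$ and $W_b$ are nondegenerate alternating, so $d-l$ and $n-d-l$ are even and each summand admits a symplectic basis with Gram matrix $\Omega_{d-l}$, respectively $\Omega_{n-d-l}$; concatenating gives $\Omega_{d-l} \oplus \Omega_{n-d-l} = \Omega_{n-2l}$, since $\Omega_{2m}$ is by definition block-diagonal with $2 \times 2$ blocks $\Omega_2$. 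The existence of such bases is standard over an algebraically closed field of characteristic different from $2$.

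The main obstacle is purely bookkeeping: one must match the block sizes $l, d-l, n-d-l, l$ to the layout of the target matrix and keep track of the sign $e$ between the two corner blocks correctly. Once these details are in place, the proposition follows immediately from Lemma \ref{decomposition} together with the standard classification of nondegenerate symmetric and alternating bilinear forms.
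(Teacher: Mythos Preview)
Your proposal is correct and follows essentially the same approach as the paper: apply Lemma \ref{decomposition} to obtain the decomposition $V = \Iso(W) \oplus W_a \oplus W_b \oplus C$, then choose standard bases on each summand. The paper's own proof simply invokes the lemma and declares the remaining construction ``routine''; you have carried out that routine explicitly and accurately, including the bookkeeping for the sign $e$ and the block sizes.
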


\begin{proof}
	By Lemma \ref{decomposition}, we may decompose $V = \Iso(W) \oplus W_a \oplus W_b \oplus C$ so that $W_a$ and
 $W_b$ are anisotropic, $\Iso(W)$, $C$ are totally isotropic, and $\Iso(W)$, $W_a$, $W_b$, and $C$ are all pairwise
 orthogonal with the exception of $\Iso(W)$ and $C$, where $\<{\cdot, \cdot}$ restricts to a perfect pairing
 $\Iso(W) \times C \to k$.

 The construction of the basis $\mathcal B$ is routine from this point.
	%
	%
	%
\end{proof}

\begin{corollary}\label{lem:nice-basis-alt}
  Suppose $\<{\cdot, \cdot}$ is skew-symmetric. Write the dimension of $V$ as $n = 2m$. Let $W \subseteq V$ be a
  subspace with dimension $d$ and isotropy rank $n-2t$ for some $t$. Then there exists an ordered basis
  $\mathcal{B} = \{b_i\}_{i=1}^n$ of $V$ so that $W = \mathrm{span}(\{b_i\}_{i=1}^{l+t} \cup \{b_i\}_{i=m+1}^{m+t})$ and
	\begin{gather*}
	Q(\mathcal{B}) = \m{0 & -I_m \\ I_m & 0}.
	\end{gather*}
\end{corollary}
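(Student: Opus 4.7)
My plan is to apply Lemma \ref{decomposition} and then choose compatible symplectic bases on the resulting nondegenerate summands, interleaving them so that the Gram matrix attains the standard symplectic block form $\begin{pmatrix} 0 & -I_m \\ I_m & 0 \end{pmatrix}$. This is essentially the skew-symmetric case of Proposition \ref{lem:nice-basis}, but with a different ordering that yields the symplectic normal form rather than the block-antidiagonal one.

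First I would apply Lemma \ref{decomposition} to write $V = \Iso(W) \oplus W_a \oplus W_b \oplus C$, mutually orthogonal except for $(\Iso(W), C)$ which carry a perfect pairing. Skew-symmetry forces the two nondegenerate summands $W_a, W_b$ to have even dimension, say $\dim W_a = 2t$ and $\dim W_b = 2s$; writing $l = \dim \Iso(W) = \dim C$, the identity $n = 2m$ gives $l + t + s = m$, so the parameter $t$ in the statement is read as half the rank of $\<{\cdot,\cdot}|_W$ and $s$ as half the rank of $\<{\cdot,\cdot}|_{W^\perp}$.

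Next I would choose bases on each piece. On each symplectic subspace I pick a Darboux basis: $\{x_i, y_i\}_{i=1}^t$ of $W_a$ and $\{u_j, v_j\}_{j=1}^s$ of $W_b$, normalized so that $\<{x_i, y_i} = -1 = \<{u_j, v_j}$ and all other pairings vanish. On $\Iso(W)$ I pick an arbitrary basis $\{z_i\}_{i=1}^l$, then use the perfect pairing from Lemma \ref{lem:IsoComplement} to produce the basis $\{w_i\}_{i=1}^l$ of $C$ with $\<{z_i, w_j} = -\delta_{ij}$. I then assemble the ordered basis
\[
\mathcal{B} = (x_1, \ldots, x_t,\; z_1, \ldots, z_l,\; u_1, \ldots, u_s,\; y_1, \ldots, y_t,\; w_1, \ldots, w_l,\; v_1, \ldots, v_s),
\]
whose first $m$ entries form the ``left half'' and last $m$ entries the ``right half''.

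Finally one verifies the two requirements. All pairings within a single half vanish by the orthogonality relations from the decomposition together with the isotropy of $\Iso(W)$ and $C$, so the two diagonal $m \times m$ blocks of $Q(\mathcal{B})$ are zero. Between the halves, the only nonzero pairings are the three matched families $\<{x_i, y_i}, \<{z_i, w_i}, \<{u_j, v_j}$, each equal to $-1$, and together they assemble into $-I_m$. For the span, $W = \Iso(W) \oplus W_a = \mathrm{span}(x_1, \ldots, x_t, z_1, \ldots, z_l, y_1, \ldots, y_t)$, which in the new indexing is exactly $\{b_i\}_{i=1}^{l+t} \cup \{b_i\}_{i=m+1}^{m+t}$. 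The only mild obstacle is bookkeeping: the dual basis of $C$ must be scaled and ordered so that its contribution to the off-diagonal block is $-I_l$ and sits in the correct positions, and the three symplectic pieces must be stacked in the correct order so their diagonal contributions combine into a single $-I_m$ block; the rest is routine linear algebra.
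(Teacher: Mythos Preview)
Your proof is correct and takes essentially the same approach as the paper. The paper's proof is a one-liner---``take the basis from Proposition~\ref{lem:nice-basis} and rearrange the middle $n-2l$ elements''---and your construction simply unpacks this by building the interleaved symplectic basis directly from the decomposition of Lemma~\ref{decomposition}; the ordering you give and the verification of the Gram matrix and of the span of $W$ are exactly the routine details the paper omits.
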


\begin{proof}
 The basis $\mathcal{B}$ may be constructed by taking the ordered basis constructed by Lemma \ref{lem:nice-basis} and
 rearranging the elements $x_1,\dots,x_{n-2l}$.
\end{proof}

\subsection{Grassmannians} \label{ss:Grass}

In this section, we collect some well-known facts about the Grassmannian of $d$-dimensional subspaces of $V$ of
prescribed isotropy rank. Most elementary references for Grassmannians of $d$-planes in bilinear spaces $V$ concentrate
on the case of maximal isotropy: the Isotropic Grassmannians.

Consider the Grassmannian $\Gr(d; V)$ of $d$-planes in $V$. We abuse notation and write $V$ for the trivial bundle with
value $V$ on $\Gr(d; V)$. This Grassmannian carries a canonical sub-bundle $\sh S \hookrightarrow V$ of rank $d$, and
via the inner-product isomorphism, there is a map
  \[\psi: \sh S \to V \to V^* \to \sh S^*.\]

  Define $\Gr(\ge l, d; V)$ to be the closed subvariety of $\Gr(d; V)$ where $\rank \psi \le d-l$. That is,
  $\Gr(\ge l, d; V)$ is the variety of $d$-dimensional subspaces of $V$ having isotropy rank $\ge l$. Then define
  \[ \Gr(l, d; V) = \Gr(\ge l, d; V) \sm \Gr(\ge l+1, d; V).\] This is a locally closed subvariety of $\Gr(d;V)$,
  consisting of subspaces of $V$ of dimension $d$ and isotropy rank $l$.

\begin{lemma}\label{lem:transitive}
 Let $U,W$ be subspaces of $V$ with the same dimension and isotropy rank. Then there exists $g \in G$ such that $gU = W$
 and $g U^\perp = W^\perp$.
\end{lemma}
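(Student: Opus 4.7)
The plan is to reduce the lemma directly to Proposition \ref{lem:nice-basis}, which provides a normal form for an inclusion $W \subseteq V$ depending only on $\dim W$ and the isotropy rank of $W$.

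First, I would apply Proposition \ref{lem:nice-basis} twice: once to $U$ and once to $W$. Since $U$ and $W$ share the same dimension $d$ and isotropy rank $l$, this produces two ordered bases $\mathcal{B}_U = \{u_i\}_{i=1}^n$ and $\mathcal{B}_W = \{w_i\}_{i=1}^n$ of $V$ with the properties that $U = \spn(u_1,\dots,u_d)$, $W = \spn(w_1,\dots,w_d)$, and
\[ Q(\mathcal{B}_U) \;=\; Q(\mathcal{B}_W) \;=\; \m{0 & 0 & eI_l \\ 0 & M & 0 \\ I_l & 0 & 0}, \]
with $M$ and $e$ the same standard data (depending only on the symmetry type of $\<{\cdot, \cdot}$).

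Next, I would define the $k$-linear map $g \colon V \to V$ by $g(u_i) = w_i$ for $i = 1,\dots,n$. Because the two Gram matrices $Q(\mathcal{B}_U)$ and $Q(\mathcal{B}_W)$ coincide, one has $\<{g(u_i), g(u_j)} = \<{w_i, w_j} = \<{u_i, u_j}$ for all $i, j$, so $g$ preserves the bilinear form and therefore lies in $G$. By construction $g$ maps the first $d$ vectors of $\mathcal{B}_U$ onto the first $d$ vectors of $\mathcal{B}_W$, so $gU = W$.

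Finally, the condition $gU^\perp = W^\perp$ is automatic for any $g \in G$ satisfying $gU = W$: for any $v \in V$, one has $v \in U^\perp$ iff $\<{u,v} = 0$ for all $u \in U$ iff $\<{gu, gv} = 0$ for all $u \in U$ iff $gv \in (gU)^\perp = W^\perp$. There is no real obstacle in this argument; the entire content of the lemma has already been packaged into the normal form of Proposition \ref{lem:nice-basis}, and the only thing to verify is that identical Gram matrices in two bases yield an element of $G$ via the corresponding change-of-basis map.
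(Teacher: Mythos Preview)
Your proposal is correct and follows essentially the same argument as the paper: both apply Proposition~\ref{lem:nice-basis} to $U$ and $W$ to obtain bases with identical Gram matrices, then take $g$ to be the change-of-basis map, which is automatically in $G$. The only cosmetic difference is that the paper reads off $gU^\perp = W^\perp$ from the explicit basis description, whereas you deduce it from the general fact that isometries preserve orthogonal complements.
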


\begin{proof}
 By Lemma \ref{lem:nice-basis}, there exist bases $\mathcal{B} = \{b_i\}_{i=1}^n$ and $\mathcal{C} = \{c_i\}_{i=1}^n$
 of $V$ such that $U = \Span(\{b_i\}_{i=1}^d\})$, $W = \Span(\{c_i\}_{i=1}^d\})$, and similarly for
 $U^\perp$ and $W^\perp$, and such that $Q(\mathcal{B})=Q(\mathcal{C})$. Let $g \in \GL_n(k)$ be such that
 $gb_i = c_i$ for all $i$. Since $Q(\mathcal{B}) = Q(\mathcal{C})$, it follows that
 $\<{b_i,b_j} = \<{c_i,c_j} = \<{gb_i,gb_j}$. Therefore, $g$ preserves $\<{\cdot,\cdot}$, which is
 to say $g$ is in $G$.
\end{proof}

\begin{proposition}[Dimension of the symmetric Grassmannians]\label{prop:gr-dim-sym}
Suppose $\<{\cdot, \cdot}$ is symmetric. Then
\[\dim_k \Gr(l,d;V) = d(n-d) - \frac{l^2 + l}{2}.\]
\end{proposition}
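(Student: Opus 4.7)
The plan is to use orbit-stabilizer. By Lemma \ref{lem:transitive}, $G$ acts transitively on $\Gr(l,d;V)$, so this variety is a smooth homogeneous space $G/H$, where $H$ is the stabilizer of a chosen $W$ with $\dim W = d$ and isotropy rank $l$. Since $\dim G = \binom{n}{2}$ in the symmetric case, it suffices to compute $\dim H$ and subtract.

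To handle the stabilizer, I would fix a basis of $V$ adapted to the decomposition $V = \Iso(W) \oplus W_a \oplus W_b \oplus C$ of Lemma \ref{decomposition}, whose summands have dimensions $l$, $d-l$, $n-d-l$, and $l$. Any $g \in H$ must preserve $W = \Iso(W) \oplus W_a$, $W^\perp = \Iso(W) \oplus W_b$, and $\Iso(W) = W \cap W^\perp$. Written in block form with respect to the four summands, these invariance conditions force $g$ to be block upper-triangular of the shape
\[g = \begin{pmatrix} A_{11} & A_{12} & A_{13} & A_{14} \\ 0 & A_{22} & 0 & A_{24} \\ 0 & 0 & A_{33} & A_{34} \\ 0 & 0 & 0 & A_{44} \end{pmatrix}.\]

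Now I would impose that $g$ preserves the Gram matrix $Q$ from Proposition \ref{lem:nice-basis} via $g^{T} Q g = Q$, and read off the constraints block by block. A direct calculation yields: the $(2,2)$ and $(3,3)$ equations force $A_{22} \in O(W_a)$ and $A_{33} \in O(W_b)$, contributing $\binom{d-l}{2}$ and $\binom{n-d-l}{2}$; the $(1,4)$ equation forces $A_{44} = A_{11}^{-T}$ where $A_{11} \in \GL_l$ is otherwise free, contributing $l^2$; the $(2,4)$ and $(3,4)$ equations determine $A_{24}$ and $A_{34}$ from the other data; and $A_{12}, A_{13}$ are totally unconstrained, contributing $l(d-l) + l(n-d-l) = l(n-2l)$. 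Finally, the $(4,4)$ equation fixes the symmetric part of $A_{11}^{T}A_{14}$, leaving only the skew part of $A_{14}$ free and contributing $\binom{l}{2}$ parameters. Summing,
\[\dim H \;=\; l^2 + \binom{d-l}{2} + \binom{n-d-l}{2} + l(n-2l) + \binom{l}{2},\]
and a routine arithmetic simplification gives $\binom{n}{2} - \dim H = d(n-d) - \tfrac{l^2+l}{2}$.

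The main obstacle is the bookkeeping for $g^{T}Q g = Q$, and especially the justification that $A_{14}$ contributes exactly $\binom{l}{2}$ parameters: this rests on the fact that fixing the symmetric part of $A_{11}^{T}A_{14}$ determines it up to its skew part, which requires the characteristic to differ from $2$ so that symmetric/skew decomposition of $l \times l$ matrices is valid. Everything else is a straightforward degree-of-freedom count and a one-line algebraic identity.
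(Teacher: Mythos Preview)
Your proof is correct and follows the same overall strategy as the paper: orbit--stabilizer via Lemma~\ref{lem:transitive}, followed by a block-matrix parameter count for the stabilizer. The execution differs in two respects. First, the paper works at the Lie algebra level, counting free entries in $\mathfrak{h}\subset\mathfrak{g}$ subject to the linear condition $X^TQ+QX=0$; you work at the group level with the quadratic condition $g^TQg=Q$. Second, the paper uses the $3\times 3$ block decomposition of sizes $l,\,n-2l,\,l$ from Proposition~\ref{lem:nice-basis} and then imposes $XW\subseteq W$ as an extra constraint, whereas you refine this to the $4\times 4$ decomposition of Lemma~\ref{decomposition} so that $W$-invariance is already encoded in the block shape. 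Your finer block structure makes the role of $W$ more transparent; the paper's linearized computation makes the entry-counting more mechanical. Both reach the same arithmetic identity.

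One small slip: the $(4,4)$ equation actually fixes the symmetric part of $A_{11}^{-1}A_{14}$ (since $A_{44}=A_{11}^{-T}$), not of $A_{11}^{T}A_{14}$. This does not affect the count, since for fixed invertible $A_{11}$ the map $A_{14}\mapsto A_{11}^{-1}A_{14}$ is a linear isomorphism and the free skew part still contributes $\binom{l}{2}$ parameters.
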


\begin{proof}
 The group $G$ acts on $\Gr(l,d;V)$ transitively, by Lemma \ref{lem:transitive}.
	
 Let $W \subseteq V$ be a subspace of dimension $d$ and isotropy rank $l$. Let $H \subseteq G$ be the stabilizer of $W$
 under the action of $G$. By the orbit-stabilizer theorem,
 \[\dim_k \Gr(l,d;V) = \codim_k(H \hookrightarrow G).\]
	
 By Lemma \ref{lem:nice-basis}, there exists a basis $\mathcal{B} = \{b_i\}_{i=1}^n$ of $V$ so that
 $W = \mathrm{span}(\{b_i\}_{i=1}^d)$ and
	\begin{gather*}
 Q(\mathcal{B}) = [\<{b_i,b_j}_\phi] = \m{0 & 0 & I_l \\ 0 & I_{n-2l} & 0 \\ I_l & 0 & 0}.
	\end{gather*}
	
	We now write automorphisms of $V$ as matrices with respect to the basis $\mathcal{B}$.
	
	Observe that $A \in G$ if and only if $A^TQA = Q$. The Lie algebra $\mathfrak{g}$ of $G$ is the set of matrices
 $X$ satisfying $X^TQ + QX = 0$.
	
	Let $X \in \mat{n}$. Write $X$ in block form:
	\begin{gather*}
		X = \m{X_{11} & X_{12} & X_{13} \\ 
		X_{21} & X_{22} & X_{23} \\
		X_{31} & X_{32} & X_{33}}
	\end{gather*}
	where $X_{11}$ and $X_{33}$ are $l \times l$ matrices and $X_{22}$ is an $(n-2l)\times(n-2l)$ matrix. Then the
 condition $X^TQ + QX = 0$ is equivalent to the conditions:
	\[X_{13} = -X_{13}^T,\ X_{22} = -X_{22}^T,\ X_{31} = -X_{31}^T,\ X_{11} = -X_{33}^T,\ X_{12} = -X_{23}^T,\ X_{21} = -X_{32}^T\]
	This implies that $X_{13}$, $X_{22}$, and $X_{31}$ are skew-symmetric, and that the entries of $X_{23}$,
 $X_{32}$, and $X_{33}$ are entirely determined by the entries in $X_{12}$, $X_{11}$, and $X_{21}$
 respectively. Therefore, there are $\frac12(n^2-n)$ free entries in a matrix $X \in \mathfrak{g}$.
	
	The group $H$ is the subgroup of $G$ consisting of matrices $A \in G$ satisfying the further condition that $AW
 = W$.
 Then the Lie algebra $\mathfrak{h}$ of $H$ consists of all matrices $X \in \mathfrak{g}$ such that
 $XW \subseteq W$.
	
	If $X \in \mathfrak{h}$, the additional condition of $XW \subseteq W$ fixes some number of the $\frac12(n^2-n)$
 free entries. The number of entries fixed is the codimension of $\mathfrak{h}$ in $\mathfrak{g}$, which is the
 dimension of the Grassmannian of interest.
	
	We may choose the free entries to be the blocks $X_{11}$, $X_{12}$, $X_{21}$, and the entries beneath the
 diagonal in $X_{13}$, $X_{22}$, and $X_{31}$. Note that $XW \subseteq W$ if and only if $X$ has the block
 form $$X = \m{* & * \\ 0 & *},$$ where the lower-left block is a $d \times (n-d)$ block of zeros.
	
	The block $X_{31}$ is entirely 0, so $\frac12(l^2-l)$ free entries are fixed in this block.
	
	The lower-left $(n-d-l) \times (d-l)$ block of $X_{22}$ is entirely 0. The zeros are all below the diagonal, so
 this fixes $(n-d-l)(d-l)$ entries.
	
	The lowest $n-d-l$ rows of $X_{21}$ and the leftmost $d-l$ rows of $X_{32}$ are entirely 0. By the relation
 $X_{21} = -X_{32}^T$, this means that $X_{21}$ and $X_{32}$ are both entirely 0. $X_{21}$ is free, so $(n-2l)l$
 entries are fixed here.
	
	In total, there are $$\frac12(l^2-l) + (n-d-l)(d-l) + (n-2l)l = d(n-d) - \frac{l^2+l}{2}$$ free entries fixed by
 the additional condition $XW \subseteq W$. The result follows.
\end{proof}

\begin{proposition}[Dimension of the skew-symmetric Grassmannians] \label{pr:dimAltGr} Suppose $\<{\cdot, \cdot}$ is skew-symmetric. Then
 \[ \dim_k \Gr(l,d;V) = d(n-d) - \frac{l^2 - l}{2}.\]
\end{proposition}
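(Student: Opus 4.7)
The plan is to follow the strategy of the proof of Proposition \ref{prop:gr-dim-sym} line by line, with the modifications needed for the symplectic Lie algebra. First I would invoke Lemma \ref{lem:transitive} to conclude that $G$ acts transitively on $\Gr(l,d;V)$. Fixing a subspace $W$ of dimension $d$ and isotropy rank $l$ with stabilizer $H \subseteq G$, the orbit-stabilizer theorem then gives
\[ \dim_k \Gr(l,d;V) = \codim_k(\mathfrak{h} \hookrightarrow \mathfrak{g}), \]
where $\mathfrak{g}$ and $\mathfrak{h}$ are the Lie algebras of $G$ and $H$. I would then invoke Lemma \ref{lem:nice-basis} with $M = \Omega_{n-2l}$ and $e = -1$ to choose a basis in which $Q(\mathcal{B})$ takes block anti-diagonal form and $W$ is the span of the first $d$ basis vectors.

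Next I would compute $\mathfrak{g}$ by writing $X$ in $3\times 3$ block form with block dimensions $(l, n-2l, l)$ and extracting the conditions from $X^T Q + QX = 0$. The key differences from the orthogonal case are that $X_{13}$ and $X_{31}$ become \emph{symmetric} (rather than skew-symmetric) and that $X_{22}$ lies in the Lie algebra of the symplectic group associated to $\Omega_{n-2l}$, i.e., satisfies $X_{22}^T \Omega + \Omega X_{22} = 0$. Using $\Omega^2 = -I$ to parametrize $X_{22} = -\Omega Y$ with $Y$ symmetric, one checks $\dim \mathfrak{g} = \tfrac{n(n+1)}{2}$ as a sanity check.

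I would then impose $XW \subseteq W$ with $W = \Span(b_1,\dots,b_d)$. This translates to four vanishing conditions: the block $X_{31}$; the bottom $(n-d-l)$ rows of $X_{21}$; the left $(d-l)$ columns of $X_{32}$; and the bottom-left $(n-d-l)\times(d-l)$ sub-block of $X_{22}$. Since $X_{32} = X_{21}^T \Omega$ with $\Omega$ invertible, the $X_{32}$ condition combines with that on $X_{21}$ to force $X_{21} = 0$ entirely, giving $l(n-2l)$ constraints. The condition $X_{31} = 0$ contributes $\tfrac{l(l+1)}{2}$ constraints (the sign change from $\tfrac{l(l-1)}{2}$ in the orthogonal case reflects the symmetry of $X_{31}$ here and is the ultimate source of the sign flip in the final formula). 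Writing $X_{22} = -\Omega Y$, the required vanishing of the bottom-left sub-block of $X_{22}$ translates, via the block-diagonal structure of $\Omega_{n-2l}$, to the vanishing of $(n-d-l)(d-l)$ below-diagonal entries of the symmetric matrix $Y$; these are all independent free entries.

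Summing then yields $\tfrac{l(l+1)}{2} + l(n-2l) + (n-d-l)(d-l) = d(n-d) - \tfrac{l^2-l}{2}$, as claimed. The main obstacle is carefully unpacking the constraint on the symplectic block $X_{22}$ and verifying that the $(n-d-l)(d-l)$ imposed conditions correspond to an independent set of free entries of the parametrizing symmetric matrix $Y$; the remaining bookkeeping mirrors the orthogonal case.
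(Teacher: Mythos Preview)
Your argument is correct. The route, however, differs from the paper's in the choice of basis. You invoke Lemma~\ref{lem:nice-basis} directly, so that $W$ is the span of the first $d$ basis vectors and $Q$ has the block $\Omega_{n-2l}$ in the middle; the paper instead uses Corollary~\ref{lem:nice-basis-alt}, where $Q = \left[\begin{smallmatrix} 0 & -I_m \\ I_m & 0 \end{smallmatrix}\right]$ and $W$ is spanned by a non-contiguous index set $N = \{1,\dots,l+t\} \cup \{m+1,\dots,m+t\}$ with $t = (d-l)/2$. Your choice keeps the parallel with the orthogonal case (Proposition~\ref{prop:gr-dim-sym}) tight and makes the description of $W$ simple, at the price of handling the $\Omega_{n-2l}$ block via the substitution $X_{22} = -\Omega Y$. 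The paper's choice gives a cleaner description of $\mathfrak{g}$ (just $X_{11} = -X_{22}^T$ with $X_{12}, X_{21}$ symmetric) and avoids any auxiliary parametrization, at the cost of tracking the scattered index set $N$ when imposing $XW \subseteq W$. Both counts land on the same total.

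One small point to tighten: in deducing $X_{21} = 0$ from the conditions on $X_{21}$ and $X_{32} = X_{21}^T \Omega$, invertibility of $\Omega$ alone is not the operative fact. What you actually need is that the first $d-l$ columns of $\Omega_{n-2l}$ span exactly $\mathrm{span}(e_1,\dots,e_{d-l})$, so that vanishing of the left $d-l$ columns of $X_{32}$ forces vanishing of the \emph{top} $d-l$ rows of $X_{21}$. This holds because $\Omega_{n-2l}$ is block-diagonal in $2\times 2$ blocks and $d-l$ is even (automatic in the skew case since the form on $W/\Iso(W)$ is nondegenerate skew-symmetric). You invoke the block-diagonal structure later for $X_{22}$; it is worth flagging it here as well.
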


\begin{proof}
 The proof is analogous to that of Proposition \ref{prop:gr-dim-sym}.
	
 Again, $G$ is the group of automorphisms of $V$ that preserve the bilinear form $\<{\cdot,\cdot}$. The group $G$ acts
 on $\Gr(d-l,d;V,\phi)$. The action is transitive by Lemma \ref{lem:transitive}. Fix a subspace $W$ of dimension $d$,
 and isotropy rank $l$. Let $t = (d-l)/2$. Let $H \subseteq G$ be the stabilizer of $W$ under the action of $G$.
	
 Write $m=n/2$. By Lemma \ref{lem:nice-basis-alt}, there is a basis $\mathcal{B} = \{b_i\}_{i=1}^n$ of $V$ so that
 $W = \mathrm{span}(\{b_i\}_{i=1}^{l+t} \cup \{b_i\}_{i=m+1}^{m+t})$ and
 \begin{gather*}
 Q(\mathcal{B}) = \m{0 & -I_m \\ I_m & 0}.
	\end{gather*}
	
	The Lie algebra $\mathfrak{g}$ consists of all matrices $X$ for which $X^TQ + QX = 0$. Let $X \in \mat{n}$ and
 write
	\begin{gather*}
		X = \m{X_{11} & X_{12} \\ X_{21} & X_{22}}.
	\end{gather*}
	where each $X_{ij}$ is an $m \times m$ matrix. The condition $X^TQ+QX = 0$ is equivalent to the conditions that $X_{11} = -X_{22}^T$, and $X_{12}$ and $X_{21}$ are symmetric. Therefore, $X$ has $\frac12(n^2-n)$ free entries. Let us say the free entries are $X_{11}$ and the entries on and below the diagonals in $X_{12}$ and $X_{21}$. 
	
	The Lie algebra $\mathfrak{h}$ consists of matrices in $\mathfrak{g}$ satisfying $XW \subseteq W$. The number of free entries fixed by this additional condition is the codimension of $\mathfrak{h} \hookrightarrow \mathfrak{g}$ and also the dimension of the Grassmannian.
	
	Let $N = \{i: b_i \in W\}$. Then $W = \mathrm{span}(\{b_i: i \in N\})$. Write the entries of $X$ as $x_{ij}$. The condition that $XW \subseteq W$ is equivalent to the condition that $x_{ij} = 0$ if $i \in N$ and $j \notin N$.
	
	In $X_{11}$, the bottom left $(m-l-t)\times(l+t)$ entries are 0. In addition, notice that the bottom left
 $(m-t)\times t$ entries of $X_{22}$ are 0. By the relation $X_{11} = -X_{22}^T$, the upper right $t\times(m-t)$
 entries of $X_{11}$ are 0. The two blocks do not overlap so there are $(m-l-t)(l+t) + t(m-t)$ fixed entries in $X_{11}$.
	
	In $X_{12}$, the bottom $(m-l-t)\times t$ entries are 0. All entries in this block are either on or below the diagonal of $X_{12}$, so this entire block was free to begin with. Thus there are $t(m-l-t)$ entries fixed in $X_{12}$.
	
	In $X_{21}$, the bottom $(m-t)\times(l+t)$ entries are 0. There are $\frac12(l^2-l)$ entries in this block strictly above the diagonal of $X_{21}$. Thus, there are $(m-t)(l+t)-\frac12(l^2-l)$ initially free entries that are fixed in $X_{21}$.
	
	Altogether, we have $$(m-l-t)(l+t) + t(m-t) + t(m-l-t) + (m-t)(l+t)-\frac12(l^2-l) = d(n-d) - \frac{l^2-l}{2}$$ entries fixed by the additional condition that $XW \subseteq W$. The result follows.
\end{proof}

\section{Generating tuples} \label{sec:GeneratingTuples}

Let $U(r;V) \subseteq \End_k(V)^r$ be the set of $r$-tuples of elements that generate $\End_k(V)$ as an algebra with
involution. We define $Z(r;V) = \End_k(V)^r \setminus U(r;V)$. We may identify $\End_k(V)$ with $\A_k^{n^2r}$, so
$\End_k(V)$ is an affine variety. We will see in Proposition \ref{pr:Zclosed} that $Z(r;V)$ is closed in $\End_k(V)$, and
so $U(r;V)$ is open.

The involution $\rho$ is defined by $\rho(A) = \phi^{-1} \circ A^* \circ \phi$. It satisfies
\[ \<{ v, A w} = \phi(v) Aw = A^*(\phi(v)) w = \<{\rho(A), v}, \qquad \forall A \in \End_k(V), \, v,w \in V. \] We will
say that $W$ is a \textit{$\rho$-invariant} subspace of $A$ if $A W \subseteq W$ and $\rho(A) W \subseteq W$. If
$A_\bullet$ is an $r$-tuple of endomorphisms, then $W$ will be said to be a $\rho$-invariant subspace of $A_\bullet$ if
it is $\rho$-invariant for each $A_i$. If $A_\bullet \in \End(V)$, then write $\rho(A_\bullet)$ for the $r$-tuple
obtained by applying $\rho$ termwise to $A_\bullet$.

We are interested in the dimension of the irreducible components of $Z(r;V)$. We recall a theorem that was mentioned in
the introduction.

\begin{thm}[Burnside]
\label{burnside}
Let $A_\bullet =(A_1, ..., A_r) \in \End_k(V)^r$. Then $A_\bullet$ generates $\End_k(V)$ as an algebra if and only if
there does not exist any nonzero proper subspace $W\subsetneq V$ such that $A_i(W) \subset W$ for all
$1 \leq i \leq r$.
\end{thm}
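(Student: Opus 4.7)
The plan is to prove the two directions separately. The easy direction (the only-if) is handled by contrapositive: suppose there exists a common invariant subspace $W$ with $0 \subsetneq W \subsetneq V$. Then an induction on degree shows that every noncommutative monomial in $A_1, \ldots, A_r$ preserves $W$, hence so does every element of the unital $k$-subalgebra $B \subseteq \End_k(V)$ generated by $A_\bullet$. Since $W$ is both nonzero and proper, I would pick $w \in W \setminus \{0\}$ and $v \in V \setminus W$, extend $\{w\}$ to a basis of $V$, and form the rank-one endomorphism sending $w$ to $v$ and all other basis vectors to $0$; this endomorphism does not preserve $W$, witnessing $B \subsetneq \End_k(V)$.

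For the hard direction, I would let $B \subseteq \End_k(V)$ denote the unital $k$-subalgebra generated by $A_1, \ldots, A_r$ and reinterpret the hypothesis as the statement that $V$, viewed as a left $B$-module, is simple. Because $k$ is algebraically closed and $\dim_k V$ is finite, Schur's lemma then yields $\End_B(V) = k \cdot \mathrm{id}_V$. I would finish by invoking the Jacobson density theorem: the natural inclusion $B \hookrightarrow \End_{\End_B(V)}(V) = \End_k(V)$ has dense image, and since $V$ is finite-dimensional, density forces the map to be surjective, so $B = \End_k(V)$.

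The only ingredient requiring more than formal manipulation is the Jacobson density step, which is essentially the content of Burnside's theorem. However, this is a standard textbook result (see \cite{Lam1998}, or the short self-contained exposition in \cite{LomonosovsimplestproofBurnside2004}), so I would simply cite it rather than reproving it from scratch. No genuine obstacle arises in applying the classical argument to the present setting; note that the involution $\rho$ plays no role here, and the statement is the plain (non-involutive) version that will later be applied to the enlarged tuple $(A_1, \ldots, A_r, \rho(A_1), \ldots, \rho(A_r))$ in the sequel.
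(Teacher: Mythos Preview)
Your argument is correct and is the standard route to Burnside's theorem via Schur's lemma and Jacobson density. Note, however, that the paper does not supply its own proof of this statement: Theorem~\ref{burnside} is stated as a classical result and the reader is referred to \cite{Lam1998} and \cite{LomonosovsimplestproofBurnside2004}, so there is nothing to compare against beyond observing that your sketch matches the textbook treatment those references provide.
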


\begin{corollary}
\label{involution}
Let $A_\bullet \in \End_k(V)^r$. Then $A_\bullet$ generates $\End_k(V)$ as an algebra with involution if and only if
there does not exist any nonzero proper subspace $W\subsetneq V$ that is $\rho$-invariant for $A_\bullet$.
\end{corollary}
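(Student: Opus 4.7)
The plan is to reduce this corollary directly to Burnside's theorem (Theorem \ref{burnside}) by enlarging the tuple. By definition, $A_\bullet$ generates $\End_k(V)$ as an algebra with involution precisely when the smallest $\rho$-stable subalgebra of $\End_k(V)$ containing $A_1, \dots, A_r$ is all of $\End_k(V)$. Since the smallest $\rho$-stable subalgebra containing $A_1, \dots, A_r$ coincides with the ordinary subalgebra generated by the $2r$-tuple $(A_1, \dots, A_r, \rho(A_1), \dots, \rho(A_r))$, the condition is equivalent to this $2r$-tuple generating $\End_k(V)$ as a $k$-algebra.

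Applying Burnside's theorem to the $2r$-tuple $B_\bullet = (A_1, \dots, A_r, \rho(A_1), \dots, \rho(A_r))$, we see that $B_\bullet$ fails to generate if and only if there is some nonzero proper subspace $W \subsetneq V$ with $B_j W \subseteq W$ for every $j$. Unwinding the definition of $B_\bullet$, this is exactly the condition that $A_i W \subseteq W$ and $\rho(A_i) W \subseteq W$ for all $i$, i.e., that $W$ is $\rho$-invariant for $A_\bullet$ in the sense defined just before the statement. Taking contrapositives yields the corollary.

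There is no real obstacle here: the only subtle point is verifying that the $\rho$-stable subalgebra generated by $\{A_i\}$ equals the ordinary subalgebra generated by $\{A_i\} \cup \{\rho(A_i)\}$, and this is immediate from the fact that $\rho$ is an anti-automorphism of order two, so any noncommutative polynomial in the $A_i$ and $\rho(A_i)$ has its $\rho$-image again a noncommutative polynomial of the same form.
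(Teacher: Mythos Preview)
Your proof is correct and is exactly the argument the paper has in mind: the corollary is stated without proof immediately after Burnside's theorem, and the introduction explicitly says that generating as an algebra with involution means that $A_1,\dots,A_r,\rho(A_1),\dots,\rho(A_r)$ generate as an ordinary algebra, so the corollary is just Burnside applied to this $2r$-tuple. There is nothing to add.
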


\begin{lemma} \label{lem:rhoperp} 
 If $A \in \End_k(V)$ and $W \subset V$ then $\rho(A)W \subseteq W$ if and only if $A W^\perp \subseteq W^\perp$.
\end{lemma}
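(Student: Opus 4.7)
The plan is to prove the equivalence by directly unpacking both conditions in terms of the bilinear form and then applying the defining adjoint identity for $\rho$.

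First, I would record the adjoint identity established just before the lemma, namely $\<w, Ax\> = \<\rho(A)w, x\>$ for every $w, x \in V$ and every $A \in \End_k(V)$. Next, I would translate each of the two subspace-containment conditions into statements about the bilinear form. The key ingredient is that for a nondegenerate symmetric or skew-symmetric form, $(W^\perp)^\perp = W$ and the notions of right-perp and left-perp agree (since $\<u,v\>$ and $\<v,u\>$ differ by a sign). Thus $\rho(A)W \subseteq W$ is equivalent to $\<\rho(A)w, x\> = 0$ for every $w \in W$ and $x \in W^\perp$, while $AW^\perp \subseteq W^\perp$ is equivalent to $\<w, Ax\> = 0$ for every such $w$ and $x$.

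The last step is then immediate from the adjoint identity: the two vanishing conditions are literally the same. Concretely, given $w \in W$ and $x \in W^\perp$, one has $\<w, Ax\> = \<\rho(A)w, x\>$, so one side vanishes for all such pairs iff the other does.

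There is no real obstacle here; the only mild subtlety is being careful about the fact that for a skew-symmetric form the right- and left-orthogonal complements of $W$ still coincide (because $\<u,v\> = 0 \iff \<v,u\> = 0$), which is what legitimizes the translation from $\rho(A)w \in W = (W^\perp)^\perp$ to the vanishing condition $\<\rho(A)w, x\> = 0$ for all $x \in W^\perp$. Once this is noted, the proof is essentially a one-line application of the adjoint relation.
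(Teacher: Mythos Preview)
Your proof is correct and takes essentially the same approach as the paper: the paper's proof simply invokes the adjoint identity $\langle \rho(A)v, w\rangle = \langle v, Aw\rangle$ together with $(W^\perp)^\perp = W$, which is exactly what you spell out in more detail.
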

\begin{proof}
  This follows from the identity
  \[ \<{\rho(A)v, w} = \<{v, Aw } \]
  and the equality $(W^\perp)^\perp = W$.
%
\end{proof}



  Let $W \subset V$ be a subspace. We define $Z(W, r; V) \subset \End_k(V)^r$ to be the set of $r$-tuples $A_\bullet$
  having $W$ as a $\rho$-invariant subspace. By Lemma \ref{lem:rhoperp}, these are the $r$-tuples such that $A_i(W) \subseteq W$ and
  $A_i(W^\perp) \subseteq W^\perp$ for all $i \in \{1, \dots, r\}$.

 Since $(W^\perp)^\perp = W$, there is an equality $Z(W, r;V) = Z(W^\perp, r;V)$. It is therefore sufficient to
 consider only the cases where $\dim W \le n/2$.

\begin{notation}
Let $1 \leq d \leq n/2$. We define
\begin{equation}
 Z(d, r; V) = \bigcup_{\dim(W) = d} Z(W, r; V).
\end{equation}
where the union is taken over all dimension-$d$ subspaces $W \subset V$.
By Corollary \ref{involution},
\begin{equation}
 Z(r; V) = \bigcup_{d = 1}^{n-1} Z(d, r; V).
\end{equation}
We decompose each $Z(d, r; V)$ further, into a finite number of sets of $r$-tuples defined by the isotropy of the $\rho$-invariant subspace.

\medskip 

Let $0 \leq l \leq d$. We define
\begin{equation}
 Z( \ge l, d, r; V) = \bigcup_{\substack{\dim(W) = d \\ \dim\Iso(W) \ge  l}} Z(W, r; V).
\end{equation}
where the union is taken over all subspaces $W \subset V$ having dimension $d$ and isotropy rank $l$ or more. Note that
$Z(d,r; V) = Z(\ge 0, d,r; V)$.

 Finally, define $Z(l, d, r; V)$ to be $Z(\ge l, d, r; V) \sm Z(\ge l+1 , d ,r, V)$. This is the set of $r$-tuples of
 endomorphisms having a common $\rho$-invariant $d$-dimensional subspace of isotropy rank $l$, and no common $\rho$-invariant
 $d$-dimensional subspace of greater isotropy rank.
\end{notation}

\begin{table}[h]
  \centering
  {\small \begin{tabular}{|c||l|l|}
    \hline
    Notation & Description \\ \hline\hline
    $Z(r;V)$ & All nongenerating $r$-tuples  \\ \hline
    $Z(W, r;V)$ & $r$-tuples for which $W$ is $\rho$-invariant   \\ \hline
    $Z(d, r ; V)$ & $r$-tuples having some $d$-dimensional $\rho$-invariant subspace
    \\ \hline
    $Z(\ge l ,d, r; V)$ & $r$-tuples having some $d$-dimensional $\rho$-invariant subspace of isotropy rank at least $l$ \\ \hline
    $Z(l, d, r ;V)$ & $Z(\ge l , d, r ; V) \sm Z(\ge l+1, d,r; )$ \\
    \hline
    \end{tabular} }
                                                            \caption{Table of notations for nongenerating loci.}
  \label{tab:Zs}
\end{table}

We have decomposed $Z(r;V)$ into a finite number of sets $Z(l, d, r; V)$, which are empty unless $l \le d$ and $l \le n/2 -d$.

\begin{proposition} \label{pr:Zclosed}
 The varieties $Z(\ge l,d,r; V)$ are closed subvarieties of $\End_k(V)^r \iso \A^{n^2 r}_k$. Consequently, the sets
 $Z(l,d,r; V)$ are locally closed quasi-affine subvarieties of $\End_k(V)^r$.
\end{proposition}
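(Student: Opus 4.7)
The plan is to realize each $Z(\ge l, d, r; V)$ as the image of a suitable incidence variety under a proper projection. Recall from Subsection~\ref{ss:Grass} that $\Gr(\ge l, d; V)$ is a closed subvariety of the Grassmannian $\Gr(d; V)$, and hence is projective over $k$.

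Consider the product $\Gr(\ge l, d; V) \times \End_k(V)^r$ together with the tautological subbundle $\sh S \hookrightarrow V$ of rank $d$ pulled back to the product. I would first define the incidence variety
\[ \mathcal{I}(l, d, r; V) = \left\{ (W, A_\bullet) \,\middle|\, A_i W \subseteq W \text{ and } A_i W^\perp \subseteq W^\perp \text{ for all } i \right\}, \]
and verify that it is closed in $\Gr(\ge l, d; V) \times \End_k(V)^r$. This is the routine step: for a fixed $W$, each of the conditions $A_i W \subseteq W$ and $A_i W^\perp \subseteq W^\perp$ is cut out by the vanishing of finitely many linear functions in the entries of $A_i$, and these conditions glue to global vanishing conditions on the bundle maps $\sh S \to V/\sh S$ and $\sh S^\perp \to V/\sh S^\perp$ over the Grassmannian. (Here $\sh S^\perp$ is the kernel of the composite $V \to V^* \to \sh S^*$, which is a subbundle because it has constant rank $n-d$ on $\Gr(d;V)$.) Closedness of $\mathcal{I}(l,d,r;V)$ then follows from the fact that the zero-locus of a bundle map is closed.

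Next, I would consider the two projections from $\mathcal{I}(l,d,r;V)$ to $\Gr(\ge l, d;V)$ and to $\End_k(V)^r$ respectively. By construction, the image of the projection to $\End_k(V)^r$ is exactly $Z(\ge l, d, r; V)$, using Lemma~\ref{lem:rhoperp} to translate ``$\rho$-invariant for $A_\bullet$'' into the pair of conditions on $W$ and $W^\perp$. Since $\Gr(\ge l, d; V)$ is projective, the second projection $\Gr(\ge l, d; V) \times \End_k(V)^r \to \End_k(V)^r$ is proper; in particular it is closed. Therefore the image of the closed subset $\mathcal{I}(l, d, r; V)$ is closed in $\End_k(V)^r$, which gives the first statement.

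For the ``consequently'' clause: by definition, $Z(l, d, r; V) = Z(\ge l, d, r; V) \setminus Z(\ge l+1, d, r; V)$ is the difference of two closed subvarieties, hence locally closed (and quasi-affine as a locally closed subscheme of the affine variety $\End_k(V)^r$). The main, and essentially only, technical obstacle is verifying that the incidence condition really defines a closed subscheme of the product; once that is in place, properness of the Grassmannian factor does the rest of the work automatically.
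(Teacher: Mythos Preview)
Your argument is correct and essentially identical to the paper's: both realize $Z(\ge l,d,r;V)$ as the image, under the proper projection from $\Gr(\ge l,d;V)\times\End_k(V)^r$, of a closed incidence variety cut out as the vanishing locus of a bundle map. The only cosmetic difference is that the paper encodes $\rho$-invariance via the $2r$ maps $A_1,\dots,A_r,\rho(A_1),\dots,\rho(A_r)$ acting on the tautological subbundle, whereas you encode it (equivalently, by Lemma~\ref{lem:rhoperp}) via the $r$ maps $A_i$ acting on both $\sh S$ and $\sh S^\perp$.
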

\begin{proof}
 Let $X \subset \End_k(V)^r \times \Gr(\ge l, d, V)$ be the incidence variety consisting of pairs $(A_\bullet, W)$
 where $W$ is a $d$-dimensional subspace of $V$ that has isotropy rank no less than $l$ and is $\rho$-invariant under $A_\bullet$.

 We claim that $X$ is a closed subvariety of $\End_k(V)^r \times \Gr(\ge l, d, V)$. Since $\Gr(\ge l, d, V)$ is a closed
 subvariety of $\Gr(d, V)$, it carries two universal locally free sheaves, the universal subsheaf
 $\sh P \subseteq V$ of rank $d$ and the universal quotient $\sh Q = V/ \sh P$. We denote their pull-backs to
 $\End_k(V)^r \times \Gr(\ge l, d, V)$ by the same letters. On the other hand, $\End_k(V)^r$ carries
 $2r$ endomorphisms of $V$, corresponding to the $A_i$ and their involutions $\rho(A_i)$. We can produce a map of sheaves
 \[\xymatrix{ \sh P^{2r} \ar[r] & V^{2r} \ar^{(A_1, \dots, A_r, \rho(A_1), \dots, \rho(A_r))}[rrrr]& & & & V^{2r}
     \ar[r] & \sh Q^{2r} }\]
 on the product space, $\End_k(V)^r \times \Gr(\le s, d, V)$. The locus where this map of locally free sheaves vanishes
 is precisely the incidence variety $X$. It is therefore a closed subvariety of $\End_k(V)^r \times \Gr(\ge l, d, V)$.

 Since $\Gr(\ge l, d, V)$ is a closed subvariety of $\Gr(d, V)$, it is proper, and so the image of $X$ under the
 projection to $\End_k(V)^r$ is closed. This image is $Z(\ge l, d, r; V)$.
\end{proof}

The following two propositions demonstrate containment relations among the $Z(l,d,r;V)$. It will follow from Proposition
\ref{pr:tupleExists} that these are all the containment relations.

\begin{proposition} \label{pr:dualityEquality}
  Let $d \le n$ and $r \ge 1$. For all $l \ge 0$, the bijective correspondence $W \leftrightarrow W^\perp$ implies
  \[ Z(l, d, r; V) = Z(l, n-d, r ;V). \]
\end{proposition}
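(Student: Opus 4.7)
The plan is to establish a bijective correspondence between $d$-dimensional $\rho$-invariant subspaces of $A_\bullet$ (of isotropy rank $l$) and $(n-d)$-dimensional $\rho$-invariant subspaces of $A_\bullet$ (of isotropy rank $l$), by sending $W$ to $W^\perp$.

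First I would verify the three ingredients the correspondence needs. \textbf{(i)} The assignment $W \mapsto W^\perp$ is an involutive bijection on subspaces of $V$, since $(W^\perp)^\perp = W$. \textbf{(ii)} Dimensions behave correctly: $\dim W^\perp = n - \dim W$, and $\Iso(W^\perp) = W^\perp \cap (W^\perp)^\perp = W^\perp \cap W = \Iso(W)$, so passage to $W^\perp$ preserves isotropy rank. \textbf{(iii)} The $\rho$-invariance property is preserved: by Lemma \ref{lem:rhoperp}, $A_i W \subseteq W$ is equivalent to $\rho(A_i) W^\perp \subseteq W^\perp$, and applying the same lemma to $\rho(A_i)$ in place of $A_i$ (using $\rho^2 = \id$) gives that $\rho(A_i) W \subseteq W$ is equivalent to $A_i W^\perp \subseteq W^\perp$. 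Thus $W$ is $\rho$-invariant for $A_\bullet$ iff $W^\perp$ is.

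Combining these ingredients, $A_\bullet$ admits a $d$-dimensional $\rho$-invariant subspace with isotropy rank $\geq l$ if and only if $A_\bullet$ admits an $(n-d)$-dimensional $\rho$-invariant subspace with isotropy rank $\geq l$. In the notation of the paper this reads
\[ Z(\geq l, d, r; V) = Z(\geq l, n-d, r; V). \]
Taking this identity for $l$ and for $l+1$ and subtracting the latter (closed) locus from the former yields
\[ Z(l, d, r; V) = Z(\geq l, d, r; V) \setminus Z(\geq l+1, d, r; V) = Z(\geq l, n-d, r; V) \setminus Z(\geq l+1, n-d, r; V) = Z(l, n-d, r; V), \]
which is the claim.

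There is no real obstacle here; the argument is essentially bookkeeping once Lemma \ref{lem:rhoperp} is in place. The only point that might slip by is remembering to invoke $\rho^2 = \id$ so that both $A_i$-invariance and $\rho(A_i)$-invariance get exchanged symmetrically when passing between $W$ and $W^\perp$.
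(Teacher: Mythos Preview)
Your proof is correct and is essentially the argument the paper has in mind: the paper simply writes ``The proof is immediate,'' having already noted (just after Lemma~\ref{lem:rhoperp}) that $Z(W,r;V)=Z(W^\perp,r;V)$; you have spelled out the remaining bookkeeping that $\dim W^\perp = n-d$ and $\Iso(W^\perp)=\Iso(W)$, and then passed from the $\geq l$ loci to the exact-$l$ loci.
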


The proof is immediate.

\begin{proposition} \label{pr:inclusionInZll}
  Let $d \le n$ and $r \ge 1$. For all $l > 0$, there is an inclusion
  \[ Z(l,d,r; V) \subseteq Z(l,l, r ; V). \]
\end{proposition}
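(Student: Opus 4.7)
My plan is to show the inclusion by exhibiting, for each $A_\bullet \in Z(l,d,r;V)$, an explicit $l$-dimensional totally isotropic $\rho$-invariant subspace. Given $A_\bullet \in Z(l,d,r;V)$, by definition there exists a $d$-dimensional subspace $W \subseteq V$ with $\dim\Iso(W) = l$ such that $W$ is $\rho$-invariant under $A_\bullet$. The natural candidate for the smaller subspace is precisely $\Iso(W) = W \cap W^\perp$ itself.

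The verification then breaks into three routine checks. First, $\Iso(W)$ has dimension $l$ by construction. Second, $\Iso(W)$ is totally isotropic, which is immediate from $\Iso(W) \subseteq W^\perp$: every pair of vectors in $\Iso(W)$ is a pair of vectors in $W$ and $W^\perp$, hence orthogonal. In particular, $\dim\Iso(\Iso(W)) = l$, so $\Iso(W)$ is a point of $\Gr(l,l;V)$. Third, I need $\Iso(W)$ to be $\rho$-invariant for every $A_i$ and $\rho(A_i)$. This is where Lemma \ref{lem:rhoperp} does the work: since $A_i(W) \subseteq W$ and $\rho(A_i)(W) \subseteq W$, the lemma gives $A_i(W^\perp) \subseteq W^\perp$ and $\rho(A_i)(W^\perp) \subseteq W^\perp$. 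Intersecting, $A_i$ and $\rho(A_i)$ preserve $W \cap W^\perp = \Iso(W)$.

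Putting these together, $A_\bullet$ has a $\rho$-invariant $l$-dimensional subspace of isotropy rank $l$, so $A_\bullet \in Z(\ge l, l, r; V)$. Finally, I observe that $Z(\ge l+1, l, r; V) = \emptyset$ because the isotropy rank of an $l$-dimensional subspace is at most $l$; hence $Z(l,l,r;V) = Z(\ge l, l, r; V)$, completing the containment.

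There is no real obstacle here; the result is essentially a bookkeeping consequence of Lemma \ref{lem:rhoperp}, once one recognizes $\Iso(W)$ as the right witness. The only subtlety worth flagging explicitly in the writeup is the degenerate-looking identity $Z(l,l,r;V) = Z(\ge l, l, r; V)$, which is why the hypothesis $l > 0$ is natural (for $l = 0$ the statement would reduce to the tautology $Z(0,d,r;V) \subseteq Z(0,0,r;V) = \End_k(V)^r$ and carry no information).
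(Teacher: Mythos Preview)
Your proof is correct and follows the same approach as the paper: you take $\Iso(W)$ as the witnessing totally isotropic subspace and verify its $\rho$-invariance via Lemma~\ref{lem:rhoperp}, together with the observation $Z(l,l,r;V)=Z(\ge l,l,r;V)$. The paper's argument is simply a terser version of exactly this.
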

\begin{proof}
  It is worth noting that $Z(l,l,r; V) = Z(\ge l , l , r ;V)$ is a closed subvariety of $\End_k(V)$ consisting of
  $r$-tuples $A_\bullet$ such that there exists an $l$-dimensional totally isotropic subspace $\rho$-invariant under $A_\bullet$.
  
  If an $r$-tuple $A_\bullet$ is such that $W$ of dimension $d$ and isotropic rank $l$ is $\rho$-invariant under
  $A_\bullet$, then $\Iso(W)$, a totally isotropic subspace of dimension $l$ is also $\rho$-invariant under $A_\bullet$.
\end{proof}

\subsection{Tuples having a given invariant subspace}

\begin{proposition} \label{pr:dimOfZWr}
 Let $W \subseteq V$ be a subspace of rank $d$ and isotropy rank $l$. Then $Z(W,r; V)$ is isomorphic to an affine space
 of dimension
 \[\dim Z(W, r ; V) = r( (n-d)^2 + d^2 + l^2). \]
\end{proposition}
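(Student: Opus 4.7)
The condition on $A \in \End_k(V)$ to lie in $Z(W,1;V)$ is that $A(W) \subseteq W$ and, by Lemma \ref{lem:rhoperp}, $A(W^\perp) \subseteq W^\perp$. Since these conditions are $k$-linear in $A$ and independent for different coordinates in a product, we have $Z(W,r;V) = Z(W,1;V)^r$ as affine subspaces of $\End_k(V)^r$. It therefore suffices to show $Z(W,1;V)$ is an affine space of dimension $(n-d)^2 + d^2 + l^2$.

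The plan is to choose a basis of $V$ adapted to the decomposition of Lemma \ref{decomposition}. Write $V = \Iso(W) \oplus W_a \oplus W_b \oplus C$, where these summands have dimensions $l$, $d-l$, $n-d-l$, and $l$ respectively; note that $W = \Iso(W) \oplus W_a$ and $W^\perp = \Iso(W) \oplus W_b$. Expressed in this basis, a general endomorphism $A$ has a $4 \times 4$ block form $(A_{ij})_{1 \le i,j \le 4}$ with the block row and column sizes given by the above dimensions.

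The next step is to translate the two subspace-invariance conditions into linear constraints on these blocks. The condition $A(W) \subseteq W$ forces $A_{ij} = 0$ whenever $i \in \{3,4\}$ and $j \in \{1,2\}$, and the condition $A(W^\perp) \subseteq W^\perp$ forces $A_{ij} = 0$ whenever $i \in \{2,4\}$ and $j \in \{1,3\}$. Combining, the vanishing blocks are exactly $A_{21}, A_{23}, A_{31}, A_{32}, A_{41}, A_{42}, A_{43}$, and all other blocks are free. Thus $Z(W,1;V)$ is cut out by the vanishing of certain matrix entries, hence is an affine space.

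It remains to add up the dimensions of the free blocks. Setting $a = d-l$ and $b = n-d-l$, the free blocks contribute
\[
3l^2 + 2l(a+b) + a^2 + b^2 = l^2 + (l+a)^2 + (l+b)^2 = l^2 + d^2 + (n-d)^2,
\]
as desired. Multiplying by $r$ to account for the $r$-tuple gives the stated formula. There is no substantive obstacle: the main points are the correct translation of the two invariance conditions into block-vanishing statements, and the algebraic regrouping of the surviving block dimensions into the final closed form.
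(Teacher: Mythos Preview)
Your proof is correct and follows essentially the same approach as the paper: both reduce to $r=1$, invoke the decomposition $V = \Iso(W) \oplus W_a \oplus W_b \oplus C$ from Lemma~\ref{decomposition}, translate the conditions $A(W)\subseteq W$ and $A(W^\perp)\subseteq W^\perp$ into block-vanishing constraints, and sum the dimensions of the surviving blocks. The only cosmetic difference is that the paper tallies the free entries column by column as $l^2 + (d-l)d + (n-d-l)(n-d) + nl$, whereas you group them as $3l^2 + 2l(a+b) + a^2 + b^2$; both simplify to $(n-d)^2 + d^2 + l^2$.
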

\begin{proof}
 The space $Z(W, r; V)$ is isomorphic to the $r$-fold product $Z(W, 1; V)^r$. Therefore it suffices to show that $Z(W,
 1; V)$ is isomorphic to an affine space of dimension $(n-d)^2 + d^2 + l^2$. We may decompose $V$ as in Lemma
 \ref{decomposition} into four subspaces: $\Iso(W), W_a, W_b, C$ of dimensions $l$, $d-l$, $n-d-l$ and $l$
 respectively.
 
 An endomorphism $A \in \End_k(V)$ lies in $Z(W, 1; V)$ if and only if it satisfies the following conditions:
 \begin{enumerate}
 \item $A(W_a) \subseteq \Iso(W) \oplus W_a = W$
 \item $A(W_b) \subseteq \Iso(W) \oplus W_b = W^\perp$
 \item $A(\Iso(W)) \subset \Iso(W)$, since $\Iso(W) = W \cap W^\perp$.
 \end{enumerate}
 If we choose a basis 
 \[ \sh B = \{\overbrace{b_1, \dots , b_l}^{\Iso(W)}, \overbrace{b_{l+1}, \dots, b_d}^{W_a}, \overbrace{b_{d+1},
 \dots , b_{n-l}}^{W_b}, \overbrace{b_{n-l}, \dots, b_n}^C \} \]
 of $V$ where the indicated elements are drawn from the indicated subspaces, then an endomorphism $A$ lies in $Z(W,1)$
 if and only if it takes the following matrix form:
 \begin{equation}
 \label{eq:4}
\bbordermatrix{
 ~ & \Iso(W) & W_a & W_b & C \cr \Iso(W) & A_{11} & A_{12} & A_{13} & A_{14} \cr W_a & 0 & A_{2,2}
 & 0 & A_{2,4} \cr W_b & 0 &0 & A_{3,2} & A_{3,4} \cr C & 0 & 0 & 0 & A_{4,4} }.
 \end{equation}
 Therefore the set $Z(W,1)$ is isomorphic to affine space of dimension
 \[ l^2 + (d-l)d + (n-d-l)(n-d) + nl = (n-d)^2 + d^2 + l^2 \] as asserted.
\end{proof}

\subsection{\texorpdfstring{$r$}{r}-tuples having few invariant subspaces} \label{ss:rfewInv}

This subsection is technical, but the main result, Proposition \ref{pr:tupleExists}, should not be seen as
surprising. Suppose $W \subseteq V$ is a subspace. Consider the variety of endomorphisms $A$ of $V$ such that $W$ is
$\rho$-invariant. Any such $A$ must also have $0$, $\Iso(W)$, $W^\perp$, $W+W^\perp$ and $V$ as invariant subspaces. We
show that an $A$ exists with precisely these invariant subspaces and no others, and as an added benefit, $A$ can be
chosen to have distinct eigenvalues. From there, it is immediate that an $r$-tuple $A_\bullet$ exists for which $\{0,
\Iso(W), W, W^\perp, W+W^\perp, V\}$ is precisely the set of $\rho$-invariant subspaces.

\begin{lemma} \label{lem:nonderogatory} Let $A: V \to V$ be a linear transformation with $n$ distinct eigenvalues. Then
  $W$ is an invariant subspace for $A$ if and only if $W$ is a direct sum of eigenspaces of $A$. In particular, there
  are only finitely many invariant subspaces.
\end{lemma}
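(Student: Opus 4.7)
The plan is to exploit that an endomorphism with $n$ distinct eigenvalues is diagonalizable with $n$ one-dimensional eigenspaces $E_1, \dots, E_n$, so $V = \bigoplus_{i=1}^n E_i$. The ``if'' direction is immediate: since $A$ acts on each $E_i$ as multiplication by the eigenvalue $\lambda_i$, any direct sum of the $E_i$ is $A$-invariant.

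For the ``only if'' direction, the plan is to show that if $W$ is $A$-invariant and $w \in W$ has eigenbasis decomposition $w = \sum_{i=1}^n c_i v_i$ with $v_i \in E_i$, then each $c_i v_i$ lies in $W$. To see this, observe that the vectors $w, Aw, \dots, A^{n-1}w$ all lie in $W$, and the $k$-th one equals $\sum_i c_i \lambda_i^k v_i$. The coefficient matrix $(\lambda_i^k)_{k,i}$ is a Vandermonde matrix in the distinct values $\lambda_1, \dots, \lambda_n$, hence invertible, so each $c_i v_i$ is expressible as a $k$-linear combination of the $A^k w$ and therefore lies in $W$. Consequently, whenever $c_i \neq 0$ we have $v_i \in W$, i.e.\ $E_i \subseteq W$.

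Taking this conclusion over all $w \in W$, let $S = \{ i : E_i \subseteq W\}$. On the one hand $\bigoplus_{i \in S} E_i \subseteq W$; on the other, every $w \in W$ has $c_i = 0$ for $i \notin S$, so $w \in \bigoplus_{i \in S} E_i$. Hence $W = \bigoplus_{i \in S} E_i$, which realizes $W$ as a direct sum of eigenspaces.

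The finiteness statement is then immediate, since the invariant subspaces are indexed by subsets $S \subseteq \{1, \dots, n\}$, giving at most $2^n$ of them. I do not foresee any serious obstacle; the only point to be careful with is the Vandermonde argument, which relies precisely on the distinctness of the eigenvalues.
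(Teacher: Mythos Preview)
Your proof is correct. The approach, however, differs from the paper's. The paper argues via minimal polynomials: since $W$ is $A$-invariant, the minimal polynomial of $A|_W$ divides that of $A$, which is a product of distinct linear factors; hence $A|_W$ is diagonalizable and $W$ splits as a sum of eigenspaces of $A|_W$, each of which is automatically an eigenspace of $A$ because the eigenspaces of $A$ are one-dimensional. Your argument instead works directly with a chosen vector $w \in W$ and uses the Vandermonde relation among $w, Aw, \dots, A^{n-1}w$ to extract each eigencomponent $c_i v_i$ as an element of $W$. The paper's route is shorter and more structural, leaning on a standard fact about restrictions of diagonalizable operators; your route is more hands-on and self-contained, making the role of the distinct-eigenvalue hypothesis completely explicit through the nonvanishing of the Vandermonde determinant. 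Both lead to the same bound of $2^n$ invariant subspaces.
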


\begin{proof}
  If $W$ is a direct sum of eigenspaces, then it is invariant.
  
  Conversely, suppose $W$ is invariant. The minimal polynomial of $A|_W$ is a factor of that of $A$. Consequently, $W$
  decomposes as a direct sum of eigenspaces of $A|_W$, which are also eigenspaces of $A$.
\end{proof}

  Suppose $\sh B \subset V$ is a subset, and $U \subseteq V$ is a subspace. We will say $U$ is \textit{subordinate} to
  $\sh B$ if $U$ is the span of a subset of $\sh B$.

Fix a subspace $W \subseteq V$. Choose an ordered basis $\sh B_{0}$ for $\Iso(W)$

\begin{lemma} \label{lem:longLemma}
  There exists an extension of $\sh B_{0}$ to an ordered basis $\sh B = \sh B_0 \cup \sh B_1 \cup \sh B_2 \cup \sh
  B_3$ for $V$ with the property that
  \begin{enumerate}
  \item $\sh B_0 \cup \sh B_1$ is a basis for $W$,
  \item $\sh B_0 \cup \sh B_2$ is a basis for $W^\perp$,
    \item \label{thirdItem} If $U$ and $U^\perp$ are both subordinate to $\sh B$, then $U \in \{0, \Iso(W), W, W^\perp, W+ W^\perp , V \}$.
  \end{enumerate}
\end{lemma}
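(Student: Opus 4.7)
The plan is to apply Lemma \ref{decomposition} to obtain a decomposition $V = \Iso(W) \oplus W_a \oplus W_b \oplus C$ with $W = \Iso(W) \oplus W_a$, $W^\perp = \Iso(W) \oplus W_b$, the four summands pairwise orthogonal except for a perfect pairing $\Iso(W) \times C \to k$, and $C$ totally isotropic. I then take $\sh B_i$ to be a basis of the $i$th summand for $i \in \{1, 2, 3\}$; with this choice, conditions (1) and (2) of the lemma follow immediately from the direct-sum decompositions of $W$ and $W^\perp$.

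For condition (3), the strategy is to exploit the block structure of $\<{\cdot, \cdot}$ under this decomposition. Any subspace $U$ subordinate to $\sh B$ splits as $U = U_0 \oplus U_1 \oplus U_2 \oplus U_3$ with $U_i$ subordinate to $\sh B_i$, and a direct computation shows
\[
U^\perp \;=\; \mathrm{Ann}_{\Iso(W)}(U_3) \,\oplus\, U_1^{\perp_{W_a}} \,\oplus\, U_2^{\perp_{W_b}} \,\oplus\, \mathrm{Ann}_{C}(U_0),
\]
where the outer annihilators are taken via the perfect pairing $\Iso(W) \times C \to k$. Hence $U^\perp$ is subordinate to $\sh B$ iff each of the four summands on the right is subordinate to the corresponding piece of $\sh B$. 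I would then choose $\sh B_1, \sh B_2, \sh B_3$ generically so that the Gram matrices of $\<{\cdot, \cdot}$ restricted to $W_a$ and $W_b$, together with the pairing matrix $[\<{b_0^i, c_j}]$, have no proper nonempty row subset whose null space is a coordinate subspace. Such bases exist because the ``bad'' loci are cut out by finitely many explicit polynomial conditions and form proper closed subsets of the relevant parameter spaces (for example, a Gram matrix with all entries nonzero will suffice in most cases).

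Under such a generic choice, each $U_i$ is forced to be either $0$ or the full summand, leaving $2^4 = 16$ candidate $U$'s. A case analysis identifies the six listed subspaces among these; the remaining ten combinations, such as $(0, W_a, 0, 0) \mapsto W_a$ or $(0, 0, 0, C) \mapsto C$, must be eliminated. The main obstacle will be excluding these extraneous $U$'s: doing so requires refining the basis beyond the naive block-diagonal choice, for instance by taking $W_a$ to be a non-orthogonal complement of $\Iso(W)$ in $W$ (parametrized by a generic linear map $\lambda : W_a \to \Iso(W)$) so as to introduce off-diagonal coupling that destroys subordinateness of the orthogonal complement for the extraneous candidates. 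Low-dimensional edge cases --- most notably the skew-symmetric setting with $\dim W_a = 2$, where every 1-dimensional subspace of $W_a$ is self-orthogonal within $W_a$ and so $U_1^{\perp_{W_a}}$ is automatically subordinate --- will require separate ad hoc treatment.
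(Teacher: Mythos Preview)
Your block--decomposition approach is genuinely different from the paper's, and the formula
\[
U^\perp \;=\; \mathrm{Ann}_{\Iso(W)}(U_3)\,\oplus\, U_1^{\perp_{W_a}}\,\oplus\, U_2^{\perp_{W_b}}\,\oplus\, \mathrm{Ann}_C(U_0)
\]
is correct for the orthogonal decomposition of Lemma~\ref{decomposition}. The gap is exactly where you locate it, but it is more serious than you indicate. The ten extraneous full-block candidates such as $U=W_a$ with $U^\perp=\Iso(W)\oplus W_b\oplus C$ are subordinate \emph{because} the decomposition is orthogonal; no choice of bases \emph{within} the blocks can touch this. Your remedy---replacing $W_a$ by a shifted complement $(1+\lambda)(W_a)$---does break that particular pair, but it simultaneously destroys the displayed block formula: after the shift, the condition $v\perp U_3$ couples the $\Iso(W)$-component of $v$ with its $W_a$-component through $\lambda$, so $U^\perp$ for a subordinate $U$ no longer splits along the four summands. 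Thus the very tool you used to cut down to sixteen candidates is gone, and you would have to redo that reduction from scratch in the shifted basis. Moreover, killing all ten extraneous pairs at once requires coordinated shifts in several blocks, whose interactions you do not analyse; and the skew-symmetric edge case $\dim W_a=2$ (and its twin $\dim W_b=2$) is acknowledged but left open. As written, the argument stops precisely at the hard part.

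The paper sidesteps this tension by never imposing a block structure. It extends $\sh B_0$ one vector at a time---first inside $W$, then inside $W+W^\perp$, then in $V$---at each step choosing the new vector $w$ in a Zariski-open set defined by finitely many conditions of the form $w\notin U^\perp$ for the finitely many $U$ already subordinate. A short case analysis (tracked via an auxiliary ``very good'' condition while working inside $W$) shows no new bad pair $(U,U^\perp)$ is ever created. Because nothing is orthogonal by construction, the extraneous full-block pairs that obstruct your approach never appear.
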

\begin{proof}
  Suppose $\sh A$ is a linearly independent ordered subset of $V$ that contains $\sh B_{0}$ as an initial subset. Say
  $\sh A$ is \textit{good} if, for all $U$ subordinate to $\sh A$, either
  $U \in \{0, \Iso(W), W, W^\perp, W+W^\perp, V\}$ or $U^\perp$ is not subordinate to $\sh A$. Observe that condition
  \ref{thirdItem} is precisely the claim that the basis $\sh B$ is good.
  
  We will say $\sh A$ is \textit{very good} if it is good, and for any $U$ subordinate to $\sh A$ and containing
  $\Iso(W)$, either $U \in \{\Iso(W), W, W^\perp, W+W^\perp, V\}$ or $U^\perp \cap W$ is not subordinate to $\sh A$.

  We note that $\sh B_{0}$ itself is very good, since any subordinate subspace $U$, by virtue of being a subspace of
  $\Iso(W)$, satisfies $U^\perp \supseteq W + W^\perp \supseteq \Iso(W)$, and therefore $U^\perp$ or $U^\perp \cap W$
  can only be subordinate to $\sh B_0$ if it they are $\Iso(W)$.
  
  Now suppose $\sh A \subseteq W$ is very good and $\sh B_0 \subseteq \sh A$, but $\sh A$ is not a basis for $W$. We
  claim we may extend $\sh A$ to $\sh A \cup \{w\}$ in $W$, where the latter is also very good. The following conditions
  on an element $w \in W$ determine nonempty Zariski-open subsets of $W$:
  \begin{enumerate}
  \item $w \not \in \Span \sh A$,
  \item $\<{w, w} \neq  0$---this condition is open, and if it determines the empty set, then $W= W^\perp =
    \Iso(W)$, so $\sh B_{0}$ already spans $W$,
  \item \label{p3} for all $U$ subordinate to $\sh A$ and for which $U^\perp \not \supseteq W$, the element $w$ is not in
    $U^\perp$---for a single such $U$, this condition defines a Zariski-open and nonempty set, and there are only
    finitely many $U$ subordinate to $\sh A$.
  \end{enumerate}
  We may therefore choose some $w \in W$ meeting all the conditions listed above. We claim that the set $\sh A \cup
  \{w\}$ is very good.

  Suppose that $U, U^\perp$ are both subordinate to $\sh A \cup \{w\}$. We want to show that
  \[U \in \{0, \Iso(W), W, W^\perp, W+W^\perp, V\}.\]
  Since $\<{w, w} \neq 0$, the element $w$ can lie in at most one of
  $U$, $U^\perp$. If $w$ does not lie in either, they are both subordinate to the good set $\sh A$, and we can conclude.

  Without loss of generality, assume $w \in U^\perp$. It must be the case that $U^\perp \supseteq W$, by our choice of
  $w$. Since $U^\perp$ is subordinate to $\sh A \cup \{w \} \subset W$, it must be the case that $U^\perp = W$, and so
  $U = W^\perp$. This shows that $\sh A \cup \{w\}$ is good.

  We must show that it is very good. Suppose that $U$ and $U^\perp \cap W$ are both subordinate to $\sh A \cup \{w\}$, and $U \supseteq \Iso(W)$. Since $\sh A$ is very good by
  hypothesis, it must be the case that $w$ is in exactly one of $U$ and $U^\perp$. There are two cases.

  Suppose $w \in U$. Then consider
  $U + W^\perp$; since this contains $w$ and is orthogonal to $U^\perp \cap W$, which is subordinate to $\sh A$,
  point \ref{p3} above says that $U + W^\perp \supseteq W$. Since $U \subseteq W$, 
  we deduce that $U + W^\perp = W + W^\perp$. Since $U \supseteq \Iso(W)$ by hypothesis, it follows that $U = W$.

  Now suppose $w \in U^\perp \cap W$, so that $U$ is subordinate to $\sh A$. Point \ref{p3} says that $U^\perp \supseteq
  W$, hence $U \subseteq W^\perp$. We know $\Iso(W) \subseteq U$ by hypothesis, $U \subseteq W$ since $\Span \sh A
  \subseteq W$, and $U \subseteq W^\perp$. Therefore $U = \Iso(W)$.
  
  In either case, we deduce that $\sh A \cup \{w\}$ is very good. Proceeding inductively, we may extend $\sh B_0$ to a
  very good set $\sh B_0 \cup \sh B_1$ that is an ordered basis for $W$.

  \medskip

  Now we extend to a good basis for $W+ W^\perp$. Suppose $\sh A$ is a good set that contains $\sh B_0 \cup \sh B_1$ but
  that does not span $W+W^\perp$. The following conditions on $w \in W^\perp$ determine nonempty Zariski-open subsets of
  $W^\perp$.
  \begin{enumerate}
  \item $w \not \in \Span \sh A$,
  \item $\<{w, w} \neq  0$---this condition is open, and if it determines the empty set in $W^\perp$, then $W^\perp= W =
    \Iso(W)$, so $\sh B_{0}$ already spans $W+ W^\perp$.
  \item for all $U$ subordinate to $\sh A$ and for which $U^\perp \not \supseteq W^\perp$, the element $w$ is not in
    $U^\perp$---for a single such $U$, this condition defines a Zariski-open and nonempty set, and there are only
    finitely many $U$ subordinate to $\sh A$.
  \end{enumerate}
  We may therefore choose some $w \in W$ meeting all the conditions listed above. We claim that the set $\sh A \cup
  \{w\}$ is good.

  Suppose now that $U, U^\perp$ are both subordinate to $\sh A \cup \{w\}$. We will show that
 \[ U \in \{0, \Iso(W), W, W^\perp, W+W^\perp, V\}.\] Since $\<{w, w} \neq 0$, the element $w$ can lie in at most one of
  $U$, $U^\perp$. If $w$ does not lie in either, they are both subordinate to the good set $\sh A$, and we can conclude.

  Without loss of generality, therefore, assume $w \in U^\perp$. Then it must be the case that
  $U^\perp \supseteq W^\perp$, by our choice of $w$. We may write $U^\perp$ as $W^\perp + Y$ where $Y$ is subordinate to
  $\sh B_0 \cup \sh B_1$ and contains $\Iso(W)$, then $U = W \cap Y^\perp$, which is also subordinate to
  $\sh B_0 \cup \sh B_1$. Since $\sh B_0 \cup \sh B_1$ is a very good basis, this implies that $Y$ is either $\Iso(W)$,
  in which case $U^\perp = W^\perp$, or $Y=W$, in which case $U^\perp$ is $W+ W^\perp$.

  This shows we may enlarge $\sh B_0 \cup \sh B_1$ to $\sh B_0 \cup \sh B_1 \cup \sh B_2$, a good basis for $W+ W^\perp$
  and where $\sh B_0 \cup \sh B_2$ is a basis for $W^\perp$.

  \medskip

  Finally, we extend $\sh B_0 \cup \sh B_1 \cup \sh B_2$ to a good basis of $V$. Let $\sh A$ be a good subset of $V$
  containing $\sh B_0 \cup \sh B_1 \cup \sh B_2$ but that is not a basis for $V$. The following conditions on $w \in V$ determine nonempty Zariski-open
  subsets of $V$.
  \begin{enumerate}
  \item $w \not \in \Span \sh A$.
  \item $\<{w, w} \neq  0$.
  \item for all nonzero $U$ subordinate to $\sh A$, the element $w$ is not in
    $U^\perp$---for a single such $U$, this condition defines a Zariski-open and nonempty set, and there are only
    finitely many $U$ subordinate to $\sh A$.
  \end{enumerate}
  We may therefore choose some $w \in W$ meeting all the conditions listed above. We claim that the set $\sh A \cup
  \{w\}$ is good.

  Suppose for the sake of contradiction that $U$ and $U^\perp$ are both subordinate to $\sh A \cup \{w\}$ and do not lie
  in the set $\{0, \Iso(W), W, W^\perp, W \cap W^\perp \}$. Since $\sh A$ is good, at least one of $U$ and $U^\perp$
  must contain $w$. Without loss of generality, $w \in U^\perp$, and since $\<{w,w} \neq 0$, it must be the case that
  $w \not \in U$. Then $U$ must be subordinate to $\sh A$, contradicting the choice of $w$.

  Therefore we may extend $\sh B_0 \cup \sh B_1 \cup \sh B_2 $ to a good basis of $V$.  
\end{proof}

This lemma allows us the prove the following proposition.
\begin{proposition} \label{pr:tupleExists}
  Let $W \subseteq V$ be a subspace of rank $d$ and isotropy rank $l$. Let $r \ge 1$. There exists an $r$-tuple
  $A_\bullet$ in $Z(W, r; V) \cap Z(l,d, r; V)$ such that any subspace $U \subseteq V$ that is $\rho$-invariant under
  $A_\bullet$ is an element of
  \[ \{0, \Iso(W), W, W^\perp, W+ W^\perp, V\}. \]
  Furthermore, $A_1$ has only finitely many invariant subspaces.
\end{proposition}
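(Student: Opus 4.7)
The plan is to exploit Lemma \ref{lem:longLemma}, which produces a good ordered basis $\sh B$ of $V$ with $W$ and $W^\perp$ both subordinate to $\sh B$, and such that any subspace $U \subseteq V$ with both $U$ and $U^\perp$ subordinate to $\sh B$ lies in $\{0, \Iso(W), W, W^\perp, W+W^\perp, V\}$. Given such a basis, let $A_1$ be the diagonal endomorphism with respect to $\sh B$ having $n$ distinct nonzero eigenvalues---possible since $k$ is algebraically closed and hence infinite---and set $A_2 = \cdots = A_r = 0$.

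First I would check membership in $Z(W,r;V)$: since $W$ and $W^\perp$ are spans of subsets of $\sh B$, both are invariant under the diagonal map $A_1$, and trivially so under $A_i = 0$ for $i \ge 2$; by Lemma \ref{lem:rhoperp}, this is exactly the statement that $W$ is $\rho$-invariant under each $A_i$. Next I would identify the $\rho$-invariant subspaces of the tuple $A_\bullet$. Because $A_i = 0$ for $i \ge 2$ imposes no constraint, a subspace $U$ is $\rho$-invariant for $A_\bullet$ iff both $A_1 U \subseteq U$ and $A_1 U^\perp \subseteq U^\perp$, the latter being the content of Lemma \ref{lem:rhoperp} applied to $\rho(A_1)$. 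By Lemma \ref{lem:nonderogatory}, the invariant subspaces of $A_1$ are precisely the direct sums of its eigenlines, and because $A_1$ is diagonal in $\sh B$ with distinct eigenvalues, these eigenlines are the one-dimensional spans of the elements of $\sh B$; so both $U$ and $U^\perp$ must be subordinate to $\sh B$. Lemma \ref{lem:longLemma}(3) now forces $U \in \{0, \Iso(W), W, W^\perp, W+W^\perp, V\}$. The ``furthermore'' clause about $A_1$ having only finitely many invariant subspaces is immediate from Lemma \ref{lem:nonderogatory}.

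Finally, to show $A_\bullet$ actually lies in $Z(l,d,r;V)$ rather than merely in $Z(\ge l, d, r; V)$, I would rule out the existence of a $\rho$-invariant $d$-dimensional subspace of isotropy rank exceeding $l$. Among the six candidates, those that can have dimension $d$ are $W$ itself, $W^\perp$ (when $n=2d$), $\Iso(W)$ (when $d=l$), and $W+W^\perp$ (when $d=n-l$); using the identities $\Iso(W^\perp) = W^\perp \cap W = \Iso(W)$ and $\Iso(W+W^\perp) = \Iso(W)$, each of these candidates has isotropy rank exactly $l$, closing the argument. The substantive obstacle has already been surmounted in Lemma \ref{lem:longLemma}: once a basis is in hand that is so rigid that almost no pair $(U,U^\perp)$ can be jointly subordinate to it, the construction of $A_\bullet$ and the verification of its properties are essentially routine.
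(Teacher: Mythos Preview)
Your proof is correct and follows essentially the same approach as the paper: construct $A_1$ diagonal with distinct eigenvalues in the good basis from Lemma \ref{lem:longLemma}, set the remaining $A_i$ to zero, and use Lemmas \ref{lem:nonderogatory} and \ref{lem:rhoperp} to identify the $\rho$-invariant subspaces. Your verification that the $d$-dimensional candidates all have isotropy rank exactly $l$ is more explicit than the paper's, which simply asserts this in one sentence.
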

\begin{proof}
  First we produce a suitable matrix $A_1$.
  
  It is possible to find a good basis $\sh B$ for $V$ in the sense of Lemma \ref{lem:longLemma}. For each $i \in \{1,
  \dots, n\}$, choose an element $a_i \in k$ such that $a_i = a_j$ only if $i=j$, then define $A_1: V \to V$ by
  $A(b_i) = a_i b_i$. Then $A_1$ has $n$ distinct eigenvalues, and by Lemma \ref{lem:nonderogatory}, the subspaces
  $U \subseteq V$ that are invariant under $A$ are precisely the spaces that are subordinate to $\sh B$. The spaces $U$
  that are invariant under $\rho(A_1)$ are the spaces for which $U^\perp$ is subordinate to $\sh B$. Only the spaces in
  $\{ \Iso(W), W, W^\perp, W+W^\perp ,V \}$ satisfy the condition that both $U$ and $U^\perp$ are subordinate to
  $\sh B$, and therefore these are the only spaces that are $\rho$-invariant under $A_1$.

  \medskip
  
  Now produce the $r$-tuple $(A_1, 0, \dots , 0)$. This has both $W$ and
  $W^\perp$ as invariant subspaces, so it lies in $Z(W, r; V)$. The subspaces $Y$ such that both $Y$ and $Y^\perp$ are
  invariant are those in the set $\{0 , \Iso(W), W, W^\perp, W+ W^\perp, V\}$. If any of these has the same dimension as
  $W$, then it also has the same isotropy rank as $W$. As a consequence, $A_\bullet $ is in $Z(\ge l
  , d , r; V)$ but not in $Z(\ge l+1 , d, r; V)$, so $A_\bullet$ is in $Z(l, d,r; V)$.
\end{proof}

\subsection{Irreducible components of \texorpdfstring{$Z(l,d,r; V)$}{Z(l,d,r;V)}}

\label{ss:irreducibleCompsOfZ(ldrV)}

\begin{lemma} \label{lem:actionIrred}
 Let $K_0$ be a topological group that is irreducible as a topological space. Let $X$ be a space with a $K_0$ action,
 and let $Y \subseteq X$ be an irreducible component. Then the action of $K_0$ on $X$ restricts to an action of $K_0$
 on $Y$.
\end{lemma}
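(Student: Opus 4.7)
The plan is to show that the image of the action map $\alpha : K_0 \times Y \to X$ lies in $Y$. Equivalently, for every $g \in K_0$ and $y \in Y$, one must have $g \cdot y \in Y$.

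First I would observe that $K_0 \times Y$ is irreducible. Both factors are irreducible (for $K_0$ by hypothesis, for $Y$ because it is an irreducible component), and in the algebraic-geometric setting of this paper, where the spaces in question are varieties over the algebraically closed field $k$, products of irreducible varieties are irreducible. Since $\alpha$ is continuous, the image $\alpha(K_0 \times Y)$ is then irreducible, and so is its closure $\overline{\alpha(K_0 \times Y)}$ in $X$.

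Next, since the identity element $e \in K_0$ acts trivially on $X$, we have
\[ Y = \alpha(\{e\} \times Y) \subseteq \alpha(K_0 \times Y) \subseteq \overline{\alpha(K_0 \times Y)}. \]
Now $Y$ is an irreducible component of $X$, hence maximal among irreducible closed subsets of $X$. The set $\overline{\alpha(K_0 \times Y)}$ is irreducible, closed, and contains $Y$, so maximality forces $\overline{\alpha(K_0 \times Y)} = Y$. In particular $\alpha(K_0 \times Y) \subseteq Y$, which is exactly the claim that the $K_0$-action on $X$ restricts to an action on $Y$.

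The only non-formal step is the assertion that $K_0 \times Y$ is irreducible; this is the main point to flag, since it does not follow from general topology but does hold in the algebraic setting used throughout the paper (where in applications $K_0$ will be a connected linear algebraic group such as $G$ or $\PG$ acting on a subvariety of $\End_k(V)^r$).
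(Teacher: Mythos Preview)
Your argument is correct and matches the paper's proof essentially line for line: both observe that $K_0 \times Y$ is irreducible, push forward along the action map, and use maximality of the irreducible component $Y$ to conclude. Your version is in fact slightly more careful in passing to the closure before invoking maximality, and your explicit flag that irreducibility of the product requires the algebraic setting (rather than general topology) is a point the paper leaves implicit.
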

\begin{proof}
 The action map restricts to a map $\alpha: K_0 \times Y \to X$, and the source of this map is irreducible. Therefore, so too
 is the image, but this contains $Y$, an irreducible component. Therefore the image of $\alpha$ is precisely $Y$.
\end{proof}

Recall that $G$ denotes either $\Og(V)$ or $\Sp(V)$, depending on whether the bilinear form on $V$ is symmetric or
skew-symmetric. The group $PG$ is the quotient of $G$ by its centre, which in each case consists of $\pm 1$ times the identity. It is
well known that $\Sp(V)$, and therefore $\PSp(V)$, is connected \cite[Section 2.2.2]{Springer1994}. It is also well known
that $\Og(V)$ consists of two components, one being $\SO(V)$ and the other consisting of matrices of determinant $-1$. One sees
directly that if $n$ is odd, then $\PO(V)$ is connected, since the quotient map $\Og(V) \to \PO(V)$ identifies the two
components in this case. On the other hand, $\PO(V)$ consists of two connected components when $n$ is even.

If $K$ is a topological group, then $K_0$, the connected component of the identity, is a
normal subgroup and the quotient group $K/K_0$ is naturally isomorphic to the set of connected components
$\pi_0(K)$. The astute reader will observe that $\pi_0(K)$ usually denotes the set of path components, but the spaces we
consider are finite disjoint unions of irreducible varieties, and are therefore locally path connected, so the abuse of
notation will have no practical effect.

\begin{lemma} \label{lem:ZEqualComponents}
  Let $1 \le d \le n/2$ and let $l \le d$. Let $K \subseteq PG$ be an algebraic
  subgroup that acts transitively on $\Gr(l,d;V)$. Then
  $\pi_0(K)$ acts transitively on the set of irreducible components of $Z(l,d, r;V)$.
\end{lemma}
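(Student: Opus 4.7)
The plan is to carry out three steps: first, exhibit $K$ as acting on $Z(l,d,r;V)$ through the conjugation action of $PG$ on $\End_k(V)^r$; second, apply Lemma \ref{lem:actionIrred} to see that the identity component $K_0$ preserves each irreducible component setwise, so the action factors through $\pi_0(K)$; third, combine the transitivity hypothesis with the affine-space structure of $Z(W,r;V)$ from Proposition \ref{pr:dimOfZWr} to move any chosen component of $Z(l,d,r;V)$ to any other by an element of $K$.

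For the first step, $PG$ acts on $\End_k(V)^r$ by simultaneous conjugation; since $\rho(gAg^{-1}) = g\rho(A)g^{-1}$ for $g$ in (a lift of) $PG$, a subspace $W$ is $\rho$-invariant under $A_\bullet$ if and only if $gW$ is $\rho$-invariant under $gA_\bullet g^{-1}$. Thus $K$ preserves the property of having a common $\rho$-invariant subspace of a given dimension and a given isotropy rank, so $K$ acts on $Z(l,d,r;V)$ and on $\Gr(l,d;V)$ compatibly. For the second step, $K_0$ is a connected (hence irreducible as a topological space) algebraic group acting on $Z(l,d,r;V)$, so by Lemma \ref{lem:actionIrred} it fixes every irreducible component setwise. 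Consequently the action of $K$ on the finite set of irreducible components of $Z(l,d,r;V)$ descends to $\pi_0(K) = K/K_0$.

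For transitivity, let $Y_1, Y_2$ be irreducible components of $Z(l,d,r;V)$, and let $Y_i^\circ \subseteq Y_i$ be the nonempty open subset of points not lying in any other component. Choose $A_\bullet^{(i)} \in Y_i^\circ$; by definition of $Z(l,d,r;V)$, each $A_\bullet^{(i)}$ admits a $\rho$-invariant subspace $W_i$ of dimension $d$ and isotropy rank exactly $l$. By transitivity of $K$ on $\Gr(l,d;V)$, there exists $g \in K$ with $gW_1 = W_2$. Then $g A_\bullet^{(1)} g^{-1}$ has $W_2$ as $\rho$-invariant subspace, so both $gA_\bullet^{(1)}g^{-1}$ and $A_\bullet^{(2)}$ lie in $Z(W_2, r; V) \cap Z(l,d,r;V)$. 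By Proposition \ref{pr:dimOfZWr}, $Z(W_2,r;V)$ is an affine space; and since $Z(l,d,r;V)$ is open in $Z(\ge l, d, r;V) \supseteq Z(W_2,r;V)$, the intersection is an open subvariety of affine space, which is nonempty by Proposition \ref{pr:tupleExists}, and therefore irreducible. It thus lies in a single irreducible component $Y$ of $Z(l,d,r;V)$. The genericity of $A_\bullet^{(2)}$ forces $Y = Y_2$, while $gA_\bullet^{(1)}g^{-1} \in gY_1^\circ$ (as $g$ acts as a homeomorphism on $Z(l,d,r;V)$) forces $Y = gY_1$. Hence $gY_1 = Y_2$.

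The main technical input is the irreducibility and nonemptiness of $Z(W_2, r; V) \cap Z(l,d,r;V)$, which follow immediately from Propositions \ref{pr:dimOfZWr} and \ref{pr:tupleExists}; the rest is a routine manipulation of generic points under a transitive group action.
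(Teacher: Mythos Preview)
Your proof is correct and follows essentially the same approach as the paper: reduce the action to $\pi_0(K)$ via Lemma~\ref{lem:actionIrred}, then use that $Z(W_2,r;V)\cap Z(l,d,r;V)$ is a nonempty open subset of the affine space $Z(W_2,r;V)$ (Proposition~\ref{pr:dimOfZWr}), hence irreducible, to conclude that the two chosen generic points lie in the same component. The only cosmetic differences are that the paper phrases genericity as ``nonsingular in $Z(l,d,r;V)$'' rather than ``not lying in any other component'', and it does not cite Proposition~\ref{pr:tupleExists} for nonemptiness since the point $A_\bullet^{(2)}$ already witnesses it.
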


\begin{proof}
  To avoid vacuity, we assume that $Z(l,d,r; V)$ is not empty. The group $G$, and therefore $K$, acts on the set of
  irreducible components of $Z(l , d, r ;V)$. We claim that the action of $K$ on this set is transitive. 

  We rely on the observation that if $A_\bullet$ is a nonsingular point of the variety $Z(l,d,r;V)$, then $A_\bullet$
  lies in exactly one irreducible component of $Z(l,d,r;V)$.

  Suppose $X, X'$ are irreducible components of $Z(l,d,r;V)$. Choose points $A_\bullet \in X$ and
  $A'_\bullet \in X'$ that are both nonsingular in $Z(l, d, r;V)$. We will show there is an element $g \in K$
  such that $g A_\bullet \in X'$. To produce $g$, find $\rho$-invariant
  $d$-dimensional subspaces $W$ and $W'$ for $A_\bullet$ and $A'_\bullet$, each of isotropy rank $l$. There exists some
  $g \in K$ taking $W$ to $W'$, since $g$ acts transitively on $\Gr(l, d; V)$. Then $g A_\bullet$ is a nonsingular point in
  $Z(l, d, r; V)$ and also lies in $Z(W', r; V)$.

  Define $Y=Z(W', r; V) \cap Z(l, d, r;V)$. We know that $Z(W', r; V) \subset Z(\ge l , d, r; V)$, so that $Y$ is open
  in $Z(W', r; V)$, and therefore irreducible (see Proposition \ref{pr:dimOfZWr}). Now $gA_\bullet$ and $A'_\bullet$ are
  two points in $Y$ and both nonsingular in $Z(l, d, r;V)$, and are contained in the same irreducible subset,
  $Y$. Therefore they determine the same irreducible component of $Z(l,d,r;V)$, which proves that $K$ acts transitively
  on the set of irreducible components.

  Since the action of $K_0$ on irreducible components of $Z(l,d,r;V)$ is trivial by Lemma \ref{lem:actionIrred}, there
  is an induced transitive action of $\pi_0(K)$ on the set of irreducible components, as required.
\end{proof}

\begin{proposition}\label{pr:irreducibility}
  Suppose the variety $Z(l,d,r; V)$ is not empty. Then $Z(l,d,r;V)$ is irreducible
  unless $\<{\cdot, \cdot}$ is symmetric, $2d=n$ and $l=d$. In the exceptional case, $Z(l,d,r;V)$ consists of exactly two
  irreducible components.
\end{proposition}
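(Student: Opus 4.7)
My plan is to analyze $Z(l,d,r;V)$ via the incidence variety
\[ X^\circ := \{(A_\bullet, W) \in \End_k(V)^r \times \Gr(l,d;V) : W \text{ is } \rho\text{-invariant under } A_\bullet\}. \]
The first projection $X^\circ \to \Gr(l,d;V)$ is a vector bundle whose fiber over $W$ is the affine space $Z(W,r;V)$, of constant rank $r((n-d)^2 + d^2 + l^2)$ by Proposition \ref{pr:dimOfZWr}. Hence $X^\circ$ has the same number of irreducible components as $\Gr(l,d;V)$. Letting $\pi \colon X^\circ \to \End_k(V)^r$ denote the second projection, one checks that $Z(l,d,r;V) = \pi(X^\circ) \setminus Z(\ge l+1, d, r;V)$ is open in $\pi(X^\circ)$.

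The next step is to count the irreducible components of $\Gr(l,d;V)$. By Lemma \ref{lem:transitive}, $G$ acts transitively on $\Gr(l,d;V)$, so the question reduces to transitivity of the identity component $G_0$. When $\<{\cdot,\cdot}$ is skew-symmetric, $G = \Sp(V)$ is connected and $\Gr(l,d;V)$ is irreducible. When $\<{\cdot,\cdot}$ is symmetric, I would carry out a stabilizer calculation based on the Witt decomposition of Lemma \ref{decomposition}: the stabilizer in $\Og(V)$ of a subspace $W$ of type $(l,d)$ contains an element of determinant $-1$, given by a reflection on one of the anisotropic summands $W_a$ or $W_b$, except precisely in the exceptional case $l = d = n/2$, when both $W_a$ and $W_b$ vanish. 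Consequently $\SO(V)$ still acts transitively on $\Gr(l,d;V)$ outside the exceptional case, whereas in the exceptional case $\Gr(l,d;V)$ splits into the two classical families $\Gr^+$ and $\Gr^-$ of maximal isotropic subspaces. In all non-exceptional cases, $X^\circ$ is irreducible, hence so is its image $\pi(X^\circ)$, and therefore so is its nonempty open subvariety $Z(l,d,r;V)$.

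For the exceptional case $l = d = n/2$, no $d$-dimensional subspace can have isotropy rank exceeding $l$, so $Z(l,d,r;V) = Z(\ge l, d, r;V) = \pi(X^\circ)$, and the closedness of $\Gr(l,d;V) \hookrightarrow \Gr(d;V)$ makes $\pi$ proper. Let $X^\pm$ be the components of $X^\circ$ lying over $\Gr^\pm$ and set $Z^\pm := \pi(X^\pm)$; these are irreducible closed subsets with $Z(l,d,r;V) = Z^+ \cup Z^-$. To verify $Z^+ \neq Z^-$, I will apply Proposition \ref{pr:tupleExists} to a point $W_0 \in \Gr^+$: since $\Iso(W_0) = W_0 = W_0^\perp$, the list of possible $\rho$-invariant subspaces collapses to $\{0, W_0, V\}$, so the resulting $A_\bullet$ has $W_0$ as its unique nontrivial $\rho$-invariant subspace and therefore lies in $Z^+ \setminus Z^-$. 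This gives exactly two irreducible components. The main obstacle I anticipate is the Witt-theoretic identification of when $\SO(V)$ fails to act transitively on $\Gr(l,d;V)$; once this is settled, the rest is a formal push-forward argument along $\pi$ combined with the existence statement of Proposition \ref{pr:tupleExists}.
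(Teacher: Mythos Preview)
Your argument is correct and reaches the same conclusion, but the route differs from the paper's in a meaningful way.

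The paper does not pass through the incidence variety in the non-exceptional cases. Instead it proves an abstract lemma (Lemma~\ref{lem:ZEqualComponents}): if an algebraic subgroup $K \subseteq PG$ acts transitively on $\Gr(l,d;V)$, then $\pi_0(K)$ acts transitively on the irreducible components of $Z(l,d,r;V)$. The proof of that lemma works directly inside $Z(l,d,r;V)$, using that nonsingular points lie on a unique component and that the irreducible affine space $Z(W',r;V)\cap Z(l,d,r;V)$ connects two such points once they share an invariant subspace. The proposition is then deduced by choosing $K$ connected: $\PSp(V)$, or $\PO(V)=\SO(V)$ when $n$ is odd, or $\operatorname{PSO}(V)$ when $n$ is even and the case is non-exceptional. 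For the last of these, the paper shows $\SO(V)$-transitivity on $\Gr(l,d;V)$ via Witt's theorem plus composition with a reflection in a vector of $W^\perp\setminus\Iso(W)$---the contrapositive of your stabilizer observation.

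Your approach replaces Lemma~\ref{lem:ZEqualComponents} entirely: you lift to $X^\circ$, note it is an affine-space bundle over $\Gr(l,d;V)$ (so irreducible when the base is), push forward, and take the nonempty open piece $Z(l,d,r;V)=\pi(X^\circ)\setminus Z(\ge l+1,d,r;V)$. This is cleaner in that it avoids the nonsingular-point argument, and working with $G=\Og(V)$ rather than $PG$ lets you use the single connected group $\SO(V)$ without the odd/even case split. The cost is that you should justify the bundle claim (local triviality, or at least flatness with irreducible fibers over an irreducible base); this is routine from the description in Proposition~\ref{pr:dimOfZWr} but deserves a sentence. In the exceptional case the two proofs essentially coincide: both decompose the incidence variety over the two $\SO(V)$-orbits of maximal isotropics and invoke Proposition~\ref{pr:tupleExists} to separate the images (you should note, as the paper does, that the element of $\Og(V)\setminus\SO(V)$ swaps $Z^+$ and $Z^-$, giving $Z^-\not\subseteq Z^+$ by symmetry).
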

\begin{proof}
  We may assume $2d \le n$, since $Z(l,d,r;V) = Z(l, n-d, r;V)$.
  
  First we dispense with the non-exceptional cases. Two of these are direct. If $\<{\cdot, \cdot}$ is skew-symmetric, then applying Lemma
  \ref{lem:ZEqualComponents} using the connected group $\PSp(V)$ gives the result. If $\<{\cdot, \cdot}$ is symmetric but
  $n$ is odd, then applying the lemma using the connected group $\PO(V)=\SO(V)$ gives the result.

  Suppose now that $\<{\cdot, \cdot}$ is skew-symmetric and $n$ is even. If $W$ and $W'$ are two subspaces of $V$ of rank $d$ and isotropy rank $l$ then there exists an abstract
  isomorphism of quadratic spaces $f: W \to W'$. Witt's Theorem \cite[I.4.1]{Chevalley1954} says that $f$ may be
  extended to an element $F \in \Og(V)$. If $F$ has determinant $1$, there is nothing further to be done.

  If $F$ has determinant $-1$, then argue as follows. If $2d < n$ or $l <
  d$, then we may take some element $a \in W^\perp \sm (W^\perp \cap W)$, since
  $W^\perp \subseteq W$ would imply either $d > n-d$ or $l=d$. Precompose $F$ by the orthogonal transformation $S$ that
  acts as $-1$ on $\langle a \rangle$ and as the identity on $\langle a \rangle^\perp$. The composite $FS$ takes $W$ to
  $W'$ and has determinant $1$. Therefore, unless $2d=n$ and $l=d$, we may apply Lemma \ref{lem:ZEqualComponents} using
  the connected group $\operatorname{PSO}(V)$.

  In the last case, when the form is symmetric, $2d=n$ and $l=d$, there can be at most two irreducible components, by
  virtue of Lemma \ref{lem:ZEqualComponents}. It is well known that there are exactly two isomorphic orbits for the
  $\SO(V)$ action on the isotropic Grassmannian $\Gr(d,d;V)$ when $d = n/2$:
  \cite[p.~365]{Procesi2007}. This implies that $\Gr(d,d;V)$ is disconnected, consisting of two isomorphic components,
  since the closure of an orbit must consist of the union of that orbit and lower-dimensional orbits. Let us denote
  these components by $\Gr_a$ and $\Gr_b$, and let $W_a$ denote a subspace in $\Gr_a$. Consider the incidence variety
  $X \subseteq Z(d,d, r;V) \times \Gr(d,d;V)$ consisting of pairs $(A_\bullet, W)$ where $W$ is $\rho$-invariant
  under $A$. This too decomposes into disjoint closed subspaces: $X_a$ and $X_b$. Since the map
  $Z(d,d,r;V) \times \Gr(d,d ; V)$ is closed, the images of $X_a$ and $X_b$ in $Z(d,d,r;V)$ are closed. Denote these
  images by $Z_a$ and $Z_b$. Using Proposition \ref{pr:tupleExists}, we can produce an $r$-tuple lying in $Z_a$ but not
  in $Z_b$, for instance, one having only $W_a$ as a $d$-dimensional $\rho$-invariant subspace. Because $Z_a$ and $Z_b$ are
  interchanged by an automorphism of $Z(d,d,r; V)$, it follows that $Z(d,d,r;V) = Z_a \cup Z_b$ but neither of these
  closed sets is contained in the other. Therefore, $Z_a$ and $Z_b$ must be two distinct irreducible components of
  $Z(d,d,r;V)$.
\end{proof}

\section{Calculating dimensions} \label{sec:DimensionCalculation}

We now turn to calculating the dimension of $Z(l, d, r; V)$. The idea is simple enough: one expects
\begin{equation}
 \label{eq:1}
 \dim Z(l,d,r; V) = \dim \Gr(l,d; V) + \dim Z(W,r; V)
\end{equation}
where $W$ is some fixed subspace $W \subseteq V$ of dimension $d$ and isotropy rank $l$. On the right hand side, the
first summand is the dimension of the space of possible $\rho$-invariant subspaces and the second is the dimension of the $r$-tuples
associated with a given subspace.

The first summand was calculated in Proposition \ref{prop:gr-dim-sym} or Proposition \ref{pr:dimAltGr}, and the second
is calculated in \ref{pr:dimOfZWr}. The work is in justifying \eqref{eq:1}. The main difficulty is there is no map
$Z(l,d,r; V) \to \Gr(l,d;V)$ sending $A_\bullet$ to its associated $\rho$-invariant subspace, since an $r$-tuple
$A_\bullet$ may admit more than one $d$-dimensional $\rho$-invariant subspace $W$. What is true is that there is an open
subvariety $Y$ of $Z(l,d,r;V)$ consisting of $r$-tuples admitting only finitely many such subspaces, and so there is a
finite correspondence $Y \to \Gr(l,d;V)$. This is established in Corollary \ref{cor:Corresp}.

\subsection{Dimension as a sum}

\begin{notation} \label{not:incVar}
 Fix $d, l$ and $r$. Let $X \subset Z(l,d,r; V) \times \Gr(l,d; V)$ be the incidence variety consisting of pairs
 $(A_\bullet, W)$ where $A_\bullet$ is an $r$-tuple of endomorphisms having $W$ as a $\rho$-invariant
 subspace. There are obvious projection maps $X \to Z(l,d,r; V)$ and $X \to\Gr(l,d;V)$.
\end{notation}

\begin{proposition}
 There exists a dense open subset $U \subseteq Z(l,d,r; V)$ such that the projection \[ U \times_{Z(l,d,r; V)} X \to
   U \]
 is quasifinite.
\end{proposition}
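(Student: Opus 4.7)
The plan is to use upper semicontinuity of fiber dimension for the projection $\pi \colon X \to Z(l,d,r;V)$. First I would observe that $\pi$ is surjective: by the definition $Z(l,d,r;V) = Z(\ge l, d, r; V) \sm Z(\ge l+1, d, r; V)$, every tuple $A_\bullet$ in this locus admits, by construction, at least one $d$-dimensional $\rho$-invariant subspace whose isotropy rank is at least $l$ and at most $l$, hence equal to $l$, which gives a point of $X$ lying above $A_\bullet$.

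Next, I would produce in each irreducible component $Y$ of $Z(l,d,r;V)$ a single tuple $A_\bullet \in Y$ for which $\pi^{-1}(A_\bullet)$ is finite. When $Z(l,d,r;V)$ is irreducible (the generic case of Proposition \ref{pr:irreducibility}), this is immediate from Proposition \ref{pr:tupleExists}: the tuple it produces has all its $\rho$-invariant subspaces contained in the finite set $\{0, \Iso(W), W, W^\perp, W+W^\perp, V\}$, so in particular only finitely many of them (at most $W$ and $W^\perp$) are $d$-dimensional of isotropy rank $l$. In the exceptional case ($\<{\cdot, \cdot}$ symmetric with $l = d = n/2$), $Z(l,d,r;V)$ has two irreducible components, and the proof of Proposition \ref{pr:irreducibility} already identifies them with the two connected components of $\Gr(d,d;V)$ via the incidence variety; choosing $W$ in each component of $\Gr(d,d;V)$ in turn and applying Proposition \ref{pr:tupleExists} gives a witness tuple in each component of $Z(l,d,r;V)$. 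Note that here $W = W^\perp$, so the fiber in fact has exactly one point.

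Finally, upper semicontinuity of the fiber dimension of $\pi$ implies that
\[ U = \{A_\bullet \in Z(l,d,r;V) : \dim \pi^{-1}(A_\bullet) = 0\} \]
is open in $Z(l,d,r;V)$. By the previous step $U$ meets every irreducible component nontrivially, hence is dense. The restriction $\pi^{-1}(U) = U \times_{Z(l,d,r;V)} X \to U$ has zero-dimensional, hence finite, fibers and is therefore quasifinite.

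The main subtlety, though minor, is ensuring that the witness tuple produced in the exceptional case actually sits in the intended component of $Z(l,d,r;V)$; this reduces to reusing the component bookkeeping already set up in the proof of Proposition \ref{pr:irreducibility}, so no new difficulty arises.
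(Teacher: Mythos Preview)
Your argument is correct, but it takes a different route from the paper and leaves one step implicit.

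The paper defines $U$ explicitly as the locus of tuples $A_\bullet$ for which $A_1$ has $n$ distinct eigenvalues. Openness is then immediate, density follows from Proposition~\ref{pr:tupleExists} (which constructs $A_1$ diagonal with distinct entries), and finiteness of the fibers is direct from Lemma~\ref{lem:nonderogatory}: such an $A_1$ has only finitely many invariant subspaces at all. No semicontinuity is invoked.

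Your approach via upper semicontinuity of fiber dimension is valid, but note that semicontinuity of $y \mapsto \dim \pi^{-1}(y)$ on the \emph{target} requires $\pi$ to be closed (Chevalley's theorem gives semicontinuity only on the source). You do not check this, and it is not automatic since $\Gr(l,d;V)$ is only locally closed in $\Gr(d;V)$. The fix is short: the incidence variety inside $\End_k(V)^r \times \Gr(\ge l,d;V)$ is closed and maps properly to $\End_k(V)^r$; restricting to the open set $Z(l,d,r;V)$ one sees that every $\rho$-invariant $d$-plane of isotropy rank $\ge l$ over a point of $Z(l,d,r;V)$ already has rank exactly $l$, so this restriction coincides with your $X$, and $\pi$ is therefore proper.

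One practical advantage of the paper's concrete $U$ is that it is reused verbatim in the proof of Theorem~\ref{th:mainDimCalc}, where the explicit description of the fiber $F$ over a fixed $W$ (tuples with $A_1$ having distinct eigenvalues) is needed to identify $F$ as a nonempty open in $Z(W,r;V)$. Your abstract $U$ proves the proposition, but you would then have to produce a concrete dense open inside it for that later step anyway.
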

\begin{proof}
  If $Z(l,d,r; V)$ is empty, there is nothing to show, so assume that there exists some $A_\bullet \in Z(l, d, r; V)$.
  Let $U$ be the open subset of $Z(l,d,r;V)$ consisting of tuples $A_\bullet$ for which $A_1$ has $n$ distinct
  eigenvalues. The matrix $A_1$ has finitely many invariant subspaces. The subset $U$ is dense, by Proposition
  \ref{pr:tupleExists}. Any point in the fibre of the map $X \to Z(l,d, r; V)$ over $A_\bullet$ must consist of a pair
  $(A_\bullet, Y)$ where $Y$ is invariant under $A_1$, and therefore there can be only finitely many.
\end{proof}

\begin{notation}
 Write $X_U$ for $U \times_{Z(l,d,r; V)} X$.
\end{notation}

\begin{corollary} \label{cor:Corresp}
 The dimension $\dim Z(l, d, r; V)$ agrees with $\dim X_U$.
\end{corollary}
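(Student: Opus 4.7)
The plan is to deduce the equality $\dim Z(l,d,r;V) = \dim X_U$ from two simple observations: that $U$ is dense in $Z(l,d,r;V)$, and that the projection $X_U \to U$ is surjective as well as quasifinite. First, since $U$ is a dense open subset of $Z(l,d,r;V)$, it meets every irreducible component of $Z(l,d,r;V)$ in a dense open subset, and hence $\dim U = \dim Z(l,d,r;V)$ by the definition of dimension in Subsection \ref{sec:notConv}.

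Next I would verify that the projection $\pi \colon X_U \to U$ is surjective. By construction, any point $A_\bullet \in U \subseteq Z(l,d,r;V)$ admits some $d$-dimensional $\rho$-invariant subspace $W$ of isotropy rank exactly $l$, so the pair $(A_\bullet, W)$ is a point of $X_U$ mapping to $A_\bullet$. Combined with the already-established quasifiniteness, this says $\pi$ is a surjective, quasifinite morphism between varieties of finite type over $k$.

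Finally, for any irreducible component $X_i$ of $X_U$, the restriction $\pi|_{X_i}$ is a quasifinite morphism onto its image, and hence $\dim X_i = \dim \overline{\pi(X_i)} \le \dim U$ by the theorem on fibre dimensions. Conversely, given an irreducible component $U_j$ of $U$, surjectivity of $\pi$ implies that some irreducible component of $\pi^{-1}(U_j)$ dominates $U_j$; by quasifiniteness applied to the generic fibre of this component, its dimension equals $\dim U_j$. Taking maxima over irreducible components yields $\dim X_U = \dim U = \dim Z(l,d,r;V)$.

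The only even mildly delicate point is the second direction of the dimension inequality, which requires knowing that a surjective quasifinite morphism of varieties preserves dimension on each irreducible component of the source. This is a standard consequence of the fibre dimension theorem applied to the generic point of each irreducible component, so no real obstacle is anticipated.
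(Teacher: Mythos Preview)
Your argument is correct and follows the same overall shape as the paper's proof: first identify $\dim Z(l,d,r;V)$ with $\dim U$ via density, then compare $\dim U$ with $\dim X_U$ using quasifiniteness of the projection. The difference lies only in the tool used for the second step. The paper invokes Zariski's main theorem to pass to a dense open $Y \subseteq U$ over which $X_Y \to Y$ is actually finite, and then uses that finite surjective morphisms preserve dimension. You instead argue directly from the fibre dimension theorem applied to each irreducible component, using surjectivity (which you make explicit) for the lower bound and quasifiniteness for the upper bound. Your route is arguably more self-contained, since it avoids the machinery of ZMT, and it also spells out the surjectivity of $\pi$, which the paper leaves implicit. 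Either approach is entirely adequate for this elementary dimension count.
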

\begin{proof}
 Since $U$ is dense and open in $Z(l, d, r; V)$, the dimensions $\dim U$ and $\dim Z(l, d, r; V)$ agree. The map
 $X_U \to U$ is a quasifinite map of varieties. By use of Zariski's main theorem, \cite[Th\'eor\`eme 8.12.6]{Grothendieck1966a} there is a dense open $Y \subseteq U$ such that $X_Y \to Y$ is a
 finite map, and therefore $\dim X_Y = \dim Y = \dim U$.
\end{proof}

\begin{theorem} \label{th:mainDimCalc}
 Let $n \ge 2$, let $1 \le d \le n/2$ and let $l \le d$. In the skew-symmetric case, suppose also that $l
 \equiv d \pmod 2$. Let $r \ge 1$. Then
 \begin{align} \dim Z(l,d,r; V) &= d(n-d) - \frac{ l^2 + l}{2} + r( (n-d)^2 + d^2 + l^2) \quad \text{ if $\<{\cdot,
 \cdot}$ is symmetric} \\
 \dim Z(l,d,r; V) &= d(n-d) - \frac{ l^2 - l}{2} + r( (n-d)^2 + d^2 + l^2) \quad \text{ if $\<{\cdot, \cdot}$ is skew-symmetric} \end{align}
\end{theorem}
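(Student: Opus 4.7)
The strategy is to compute $\dim Z(l,d,r;V)$ by analyzing the incidence variety $X \subseteq Z(l,d,r;V) \times \Gr(l,d;V)$ from Notation \ref{not:incVar} via its two projections. By Corollary \ref{cor:Corresp}, $\dim Z(l,d,r;V) = \dim X_U$, where $X_U$ is the preimage of the dense open $U \subseteq Z(l,d,r;V)$ on which the first projection is quasifinite; in particular, $X_U$ is open in $X$.

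I would next compute $\dim X$ using the \emph{second} projection $\pi_2 \colon X \to \Gr(l,d;V)$. By construction, the fiber of $\pi_2$ over a subspace $W$ is precisely $Z(W,r;V)$, which by Proposition \ref{pr:dimOfZWr} is an affine space of dimension $r((n-d)^2 + d^2 + l^2)$. Since $G$ acts transitively on $\Gr(l,d;V)$ by Lemma \ref{lem:transitive}, and the diagonal $G$-action on $X$ covers this action, $\pi_2$ is a homogeneous affine-space bundle over the smooth variety $\Gr(l,d;V)$. In particular $X$ is equidimensional of dimension
\[ \dim X = \dim \Gr(l,d;V) + r\bigl((n-d)^2 + d^2 + l^2\bigr). \]

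It remains to verify $\dim X_U = \dim X$. The first projection $X \to Z(l,d,r;V)$ is surjective by definition of $Z(l,d,r;V)$, and $U$ is dense in $Z(l,d,r;V)$, so $X_U$ is a nonempty open subset of $X$. By Proposition \ref{pr:irreducibility}, $Z(l,d,r;V)$ has one component, or two isomorphic ones in the exceptional symmetric case; in the exceptional case, the two components of $\Gr(l,d;V) = \Gr(d,d;V)$ give, via the bundle structure of $\pi_2$, two components of $X$ each meeting $X_U$ (since the first projection sends each to a component of $Z(l,d,r;V)$). Either way, $X_U$ meets every component of $X$, hence $\dim X_U = \dim X$. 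Substituting the Grassmannian dimensions from Propositions \ref{prop:gr-dim-sym} and \ref{pr:dimAltGr} then yields the two asserted formulas.

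The only point of substance is the equidimensionality of $X$ needed to guarantee $\dim X_U = \dim X$; this is handled cleanly by recognizing $\pi_2$ as a homogeneous bundle, so that the total dimension is exactly $\dim \Gr(l,d;V) + \dim Z(W,r;V)$ rather than merely bounded above by this quantity. The parity hypothesis $l \equiv d \pmod 2$ in the skew-symmetric case is precisely what ensures that $\Gr(l,d;V)$ (and hence $Z(l,d,r;V)$) is nonempty, so the statement is nonvacuous; in that range Proposition \ref{pr:tupleExists} guarantees $U$ is nonempty so the argument runs as above.
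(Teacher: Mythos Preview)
Your approach is essentially the paper's: both compute the dimension via the projection from the incidence variety to $\Gr(l,d;V)$, using $G$-equivariance to control the fibers, and then invoke Propositions \ref{prop:gr-dim-sym}, \ref{pr:dimAltGr}, and \ref{pr:dimOfZWr}.

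One small inaccuracy: because $X$ is defined inside $Z(l,d,r;V)\times\Gr(l,d;V)$, the fiber of $\pi_2$ over $W$ is not all of $Z(W,r;V)$ but only $Z(W,r;V)\cap Z(l,d,r;V)$, a (nonempty, by Proposition \ref{pr:tupleExists}) open subset of the irreducible affine space $Z(W,r;V)$. So $\pi_2$ is not literally an affine-space bundle, though the fibers are still all isomorphic of the correct dimension. The paper sidesteps this by working directly with $X_U\to\Gr(l,d;V)$: this map is $G$-equivariant over a transitive $G$-action on the base, so generic flatness plus locality of flatness gives flatness everywhere, and then \cite[III.9.5]{HartshorneAlgebraicGeometry1977} yields $\dim X_U=\dim\Gr(l,d;V)+\dim F$ with $F$ the open fiber. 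This also makes your separate component-meeting argument for $\dim X_U=\dim X$ unnecessary. Your version is easily repaired by the same observation about the fibers.
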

\begin{proof}
  We know that $\dim Z(l, d, r ; V) = \dim X_U$. Here $X_U$ is the subvariety of the incidence variety defined in
  Notation \ref{not:incVar} consisting of pairs $(A_\bullet, W)$ where $A_\bullet$ is an $r$-tuple such that $A_1$ has
  $n$ distinct eigenvalues. The group $G$ acts diagonally on $X_U$, by conjugation on the endomorphisms and by the usual
  action on the subspaces $W$. The group $G$ acts transitively on $\Gr(l,d; V)$ and moreover the map
  $\pi: X_U \to \Gr(l,d; V)$ is equivariant. Since the map $X_U \to \Gr(l, d; V)$ is generically flat \cite[Th\'eor\`eme
  6.9.1]{Grothendieck1965}, and flatness is local, it is flat everywhere. Consequently
 \begin{equation}
 \label{eq:3}
 \dim X_U = \dim \Gr(l, d; V) + \dim F
 \end{equation}
 by \cite[Proposition III.9.5]{HartshorneAlgebraicGeometry1977}, where $F$
 denotes the fibre of $\pi$ over some fixed $W \in \Gr(l,d;V)$.

 The fibre $F$ is identified with an open subvariety of $Z(W, r; V)$ consisting of $r$-tuples $A_\bullet = (A_1, \dots, A_r)$ where
 $A_1$ has $n$ distinct eigenvalues, and such that the tuple $A_\bullet$ does not have any $\rho$-invariant subspace $Y$ of isotropy rank greater than $l$. The variety $Z(W, r; V)$ is irreducible, by Proposition \ref{pr:dimOfZWr}, and $F$
 is nonempty by Proposition \ref{pr:tupleExists}. The dimension of $F$ is therefore also
 \[ \dim F= r((n-d)^2 + d^2 + l^2) \]
 by Proposition \ref{pr:dimOfZWr}.

 We established the identities
 \[ \dim \Gr(l,d;V) = \begin{cases} d(n-d) - \frac{ l^2 + l}{2} \quad \text{in the symmetric case} \\
 d(n-d) - \frac{ l^2 - l}{2} \quad \text{in the skew-symmetric case} \end{cases} \]
 in Propositions \ref{prop:gr-dim-sym} and \ref{pr:dimAltGr}. Combining the dimension calculations using \eqref{eq:3}
 yields the result.
\end{proof}

\subsection{The irreducible components of \texorpdfstring{$Z(r;V)$}{Z(r;V)}}

Fixing $V$, and therefore $n = \dim(V)$, and $r$, we determine the irreducible components of $Z(r; V)$ and identify the
components of maximal dimension.

\begin{lemma} \label{pr:dstrdec}
  If we keep $V$, $r \ge 1$ and $l \ge 0$ fixed, where $2l \le n$, and allow \[d \in \{ \max\{l, 1\}, \max\{l, 1\}+1,
  \dots, [n/2]\},\] then the quantity
  $\dim Z(l,d,r; V)$ is a strictly decreasing function of $d$.
\end{lemma}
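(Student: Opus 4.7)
The plan is to read off the explicit formula from Theorem \ref{th:mainDimCalc} and reduce everything to an elementary comparison of two polynomials in $d$. Write
\[
\dim Z(l,d,r;V) = g(d) - c(l),
\]
where $g(d) = d(n-d) + r\bigl((n-d)^2 + d^2\bigr)$ and $c(l)$ collects the terms depending only on $l$ (the constant $\pm \tfrac{l^2 \pm l}{2} - rl^2$ in the symmetric or skew-symmetric case). Since $c(l)$ does not depend on $d$, proving strict monotonicity reduces to showing that $g$ is strictly decreasing on the relevant range.

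First I would carry out the one-line algebraic computation
\[
g(d+1) - g(d) = (n - 2d - 1) - 2r(n - 2d - 1) = (n-2d-1)(1 - 2r),
\]
obtained by expanding each summand. The factor $(1 - 2r)$ is strictly negative whenever $r \ge 1$. To conclude, I would check that $n - 2d - 1 > 0$ for every $d$ in the range $\{\max\{l,1\},\dots,[n/2]\}$ with $d+1$ still in this range: if $n$ is even then $d \le n/2 - 1$ gives $n - 2d - 1 \ge 1$, and if $n$ is odd then $d \le (n-3)/2$ gives $n - 2d - 1 \ge 2$. Both factors having definite sign, the product is strictly negative, which gives $g(d+1) < g(d)$ and hence the lemma in the symmetric case.

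There are no real obstacles; the only subtlety is the skew-symmetric case, where $Z(l,d,r;V)$ is non-empty only when $l \equiv d \pmod{2}$. For values of $d$ failing this congruence, $Z(l,d,r;V) = \emptyset$ and the statement is vacuous; for the relevant $d$, consecutive non-empty terms correspond to $d$ and $d+2$, and a telescoping of the previous identity yields
\[
g(d+2) - g(d) = (n-2d-3)(1-2r) + (n-2d-1)(1-2r) = 2(n-2d-2)(1-2r),
\]
which is again strictly negative on the permissible range of $d$ by the same case analysis. Thus the same formula from Theorem \ref{th:mainDimCalc} delivers the strict decrease in both the symmetric and skew-symmetric situations.
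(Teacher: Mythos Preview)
Your proof is correct and follows essentially the same line as the paper's: isolate the $d$-dependent part of the formula from Theorem~\ref{th:mainDimCalc} and show it is strictly decreasing. The paper treats $d$ as a real variable and differentiates to obtain $(1-2r)(n-2d)$, whereas you compute the discrete difference $(1-2r)(n-2d-1)$; the conclusions are identical. Your handling of the skew-symmetric parity constraint is in fact more explicit than the paper's, which simply differentiates the formula without comment on the empty cases. (One wording quibble: calling the statement ``vacuous'' at $d$ with $Z(l,d,r;V)=\emptyset$ is not quite right, since strict monotonicity is a comparison between values; what you really mean, and what the paper tacitly assumes, is that the lemma should be read as a statement about the polynomial formula, or equivalently about the non-empty terms only.)
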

\begin{proof}
  Treating $\dim Z(l,d,r; V)$, calculated in Theorem \ref{th:mainDimCalc}, as a function of one real variable $d$  and
  differentiating, we obtain the derivative
  \[ (1-2r)(n-2d). \]
  This is negative on the interval $(l,n/2)$, and consequently $\dim Z(l,d,r; V)$ is strictly decreasing on the closed
  interval $[l, n/2]$.
\end{proof}

One already knows from Proposition \ref{pr:inclusionInZll} that $Z(l,d,r; V) \subseteq Z(l,l,r;V)$, so that the
dimension was known to be weakly decreasing. We have now established:
\begin{corollary}
  The inclusion $\overline{Z(l,d, r ; V)} \subseteq Z(l,l,r;V)$ is the inclusion of a closed subvariety of strictly
  smaller dimension whenever $l < d$.
\end{corollary}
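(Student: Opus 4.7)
The plan is to package the two ingredients already at hand: the set-theoretic containment from Proposition \ref{pr:inclusionInZll}, together with the strict monotonicity established in Lemma \ref{pr:dstrdec}.

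First I would verify the closed inclusion. Proposition \ref{pr:inclusionInZll} gives $Z(l,d,r;V) \subseteq Z(l,l,r;V)$, and Proposition \ref{pr:Zclosed} shows that $Z(l,l,r;V) = Z(\ge l,\, l,\, r;V)$ is already closed in $\End_k(V)^r$. Taking closures on both sides then yields $\overline{Z(l,d,r;V)} \subseteq Z(l,l,r;V)$ as closed subvarieties.

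For the dimension comparison, if $Z(l,d,r;V)$ is empty the statement is vacuous, so I would assume otherwise. Since the closure of a locally closed subvariety of an affine variety retains its dimension, one has $\dim \overline{Z(l,d,r;V)} = \dim Z(l,d,r;V)$. When $d > n/2$ I would invoke the duality $Z(l,d,r;V) = Z(l, n-d, r;V)$ from Proposition \ref{pr:dualityEquality} in order to reduce to the case $d \le n/2$. In that range, Lemma \ref{pr:dstrdec} says $\dim Z(l,d,r;V)$ is strictly decreasing in $d$ on $[\max\{l,1\}, n/2]$, and applying this with $d > l$ produces the strict inequality $\dim Z(l,d,r;V) < \dim Z(l,l,r;V)$.

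There is no genuine obstacle; the corollary is simply the combination of the preceding lemma with the closedness of $Z(l,l,r;V)$. The only point that warrants a brief comment is the skew-symmetric parity constraint $l \equiv d \pmod 2$ required for $Z(l,d,r;V)$ to be nonempty, but this does not disturb the argument since one is only comparing dimensions of nonempty varieties, and the formula in Theorem \ref{th:mainDimCalc} is still strictly decreasing in the real variable $d$.
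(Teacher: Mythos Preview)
Your proposal is correct and matches the paper's approach exactly: the paper presents this corollary as an immediate consequence (``We have now established:'') of Proposition~\ref{pr:inclusionInZll} and the strict monotonicity in Lemma~\ref{pr:dstrdec}, which is precisely what you do. Your additional remarks on closedness of $Z(l,l,r;V)$, dimension under closure, and the parity constraint are all fine elaborations of what the paper leaves implicit.
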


We now know enough to determine the irreducible components of $Z(r;V)$, the subvariety of $\End_k(V)^r$ consisting of
$r$-tuples that do not generate $\End_k(V)$ as an algebra with involution. We begin with a small lemma.

\begin{lemma} \label{lem:irredSubsFinite}
  Suppose $X$ is a topological space that admits a decomposition
  \[ X = \bigcup_{i=1}^n A_i \]
  where each $A_i$ is an irreducible closed subset of $X$. The $A_i$ are the irreducible components of $X$ if and only
  if no $A_i$ is contained in any other.
\end{lemma}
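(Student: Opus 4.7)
The plan is to prove both directions of the biconditional using the standard characterization of irreducibility: a closed subset $B$ of $X$ is irreducible if, whenever $B$ is written as a finite union of closed subsets, $B$ equals one of them.

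First, I would treat the forward direction. Assume the $A_i$ are the irreducible components. An irreducible component is by definition a maximal irreducible closed subset of $X$. If some $A_i \subseteq A_j$ with $i \neq j$, then $A_i$ would fail to be maximal (since $A_j$ is a strictly larger irreducible closed subset containing it, noting that $A_i \neq A_j$ because the irreducible components of a space are distinct). This contradicts the assumption.

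For the reverse direction, assume no $A_i$ is contained in any other. I would verify two things. \emph{First}, each $A_i$ is maximal among irreducible closed subsets of $X$: suppose $A_i \subseteq B$ with $B$ irreducible and closed. Then $B = \bigcup_{j=1}^n (B \cap A_j)$ is a finite union of closed subsets, and by irreducibility of $B$ we get $B = B \cap A_j$ for some $j$, i.e., $B \subseteq A_j$. Then $A_i \subseteq A_j$, forcing $i = j$ by hypothesis, hence $B = A_i$. \emph{Second}, every irreducible component $C$ of $X$ appears among the $A_i$: writing $C = \bigcup_{j=1}^n (C \cap A_j)$ and using irreducibility of $C$, we find $C \subseteq A_j$ for some $j$, and then maximality of $C$ together with irreducibility of $A_j$ gives $C = A_j$.

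There is no real obstacle here; the only subtlety is to remember to verify both that each $A_i$ is an irreducible component and that every irreducible component is one of the $A_i$. The argument is entirely formal and uses only the defining property of irreducibility applied to the finite decomposition.
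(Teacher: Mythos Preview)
Your proof is correct and follows essentially the same approach as the paper: both directions use the decomposition $B = \bigcup_j (B \cap A_j)$ of an irreducible closed set $B$ to force $B \subseteq A_j$ for some $j$. The paper orders the reverse direction slightly differently (first showing every irreducible component equals some $A_j$, then deducing each $A_i$ is a component), but the content is the same.
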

\begin{proof}
  If the $A_i$ are irreducible components, then by definition no $A_i$ can be contained in $A_j$ when $i \neq j$.

  In the other direction, we first observe that if $M$ is an irreducible component of $X$, then
  \[ M = \bigcup_{i=1}^n (A_i \cap M) \] so that $M$ must actually equal one of the $A_i$. Therefore the irreducible
  components of $X$ are drawn from the set $\{A_1, \dots, A_n\}$. If some $A_i$ in this set were not an irreducible
  component, it would be contained in some other $A_j$, contrary to hypothesis.
\end{proof}

\begin{theorem} \label{th:closedCoverOfZsymm}
  Define
  \[ I = \begin{cases} \{ d \in \ZZ \mid 1 \le d, \quad 2d \le n \} \quad & \text{ if $\<{\cdot, \cdot}$ is
        symmetric. }\\ \{ 
      d \in \ZZ \mid 1 \le d,\; 2d \le n, \; d\equiv 0 \!\!\!\pmod{2} \} \quad & \text{ if $\<{\cdot, \cdot}$ is
        skew-symmetric. }\end{cases} \] The variety $Z(r;V)$ admits a cover by closed subvarieties
  \begin{equation}
    Z(r; V) = \bigcup_{d=1}^{[n/2]} Z(d,d,r;V) \cup \bigcup_{d\in I} \overline{ Z(0, d, r ;V) }. \label{eq:7}
  \end{equation}
  The irreducible components of the varieties appearing in \eqref{eq:7} are the irreducible components of $Z(r;V)$.
\end{theorem}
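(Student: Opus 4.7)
The plan is to establish the cover \eqref{eq:7} first, and then invoke Lemma \ref{lem:irredSubsFinite} to reduce the irreducible-components claim to showing that no listed piece is contained in another.

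For the cover, Corollary \ref{involution} gives $Z(r;V) = \bigcup_{d=1}^{n-1} Z(d,r;V)$, and the self-duality $Z(l,d,r;V) = Z(l,n-d,r;V)$ of Proposition \ref{pr:dualityEquality} restricts the range to $1 \le d \le [n/2]$. Each $Z(d,r;V)$ breaks up as $Z(0,d,r;V) \cup \bigcup_{l \ge 1} Z(l,d,r;V)$, and Proposition \ref{pr:inclusionInZll} absorbs every $l\ge 1$ piece into $Z(l,l,r;V)$. Passing to closures produces \eqref{eq:7}. In the skew-symmetric case, $Z(0,d,r;V)$ is empty whenever $d$ is odd (a nondegenerate subspace of a symplectic space must have even dimension), which accounts for the form of $I$. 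By Proposition \ref{pr:irreducibility}, each listed piece is irreducible, except that in the symmetric $n$-even case $Z(n/2,n/2,r;V)$ has exactly two irreducible components; these are distinct components of a single variety and so trivially non-comparable to each other.

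The main tool for the remaining non-containment checks is Proposition \ref{pr:tupleExists}, which produces, for any chosen subspace $W \subseteq V$, a tuple in $Z(W,r;V)$ whose $\rho$-invariant subspaces comprise only the minimal set $\{0,\Iso(W),W,W^\perp,W+W^\perp,V\}$; for $W$ either nondegenerate or totally isotropic, this simplifies to $\{0,W,W^\perp,V\}$. For two pieces $\overline{Z(0,d,r;V)}$ and $\overline{Z(0,d',r;V)}$ with $d<d'$, Lemma \ref{pr:dstrdec} gives strict dimension monotonicity ruling out one direction, while a test tuple at a nondegenerate $W$ of dimension $d'$ rules out the other. For pairs $Z(d,d,r;V)$ and $Z(d',d',r;V)$ with $d \neq d'$, or mixed pairs $(\overline{Z(0,d,r;V)}, Z(l,l,r;V))$ with $d \neq l$, the same construction separates the pieces: the test tuple has invariant subspaces only in dimensions $\{0,d,n-d,n\}$ or $\{0,l,n-l,n\}$, and the range restrictions $d,d' \le [n/2]$ (or $d,l \le [n/2]$) together with the relevant inequality force the required dimension to be absent.

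The only case not handled by test tuples, and the main obstacle, is the pair $\overline{Z(0,l,r;V)}$ versus $Z(l,l,r;V)$, that is, $d=l$. Here I combine dimension with disjointness. From Theorem \ref{th:mainDimCalc},
\[ \dim Z(l,l,r;V) - \dim \overline{Z(0,l,r;V)} = \frac{l\bigl((2r-1)l - \epsilon\bigr)}{2} \ge 0, \]
where $\epsilon=1$ in the symmetric case and $\epsilon=-1$ in the skew-symmetric case. If an irreducible component $Z'$ of $Z(l,l,r;V)$ were contained in $\overline{Z(0,l,r;V)}$, then equality of dimensions of two irreducibles would force $Z' = \overline{Z(0,l,r;V)}$; this yields $Z(0,l,r;V) \subseteq Z' \subseteq Z(l,l,r;V)$, contradicting the disjointness of $Z(0,l,r;V)$ and $Z(l,l,r;V)$ in the stratification of $Z(d,r;V)$. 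Conversely, a test tuple at a nondegenerate $W$ of dimension $l$ exhibits a point of $\overline{Z(0,l,r;V)}$ outside $Z(l,l,r;V)$, since all of its invariant subspaces have isotropy rank $0$ and so none is totally isotropic of dimension $l \ge 1$.
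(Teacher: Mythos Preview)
Your argument is correct and follows essentially the same route as the paper: establish the cover via Proposition~\ref{pr:inclusionInZll}, invoke Lemma~\ref{lem:irredSubsFinite}, and separate the listed pieces using the test tuples of Proposition~\ref{pr:tupleExists} together with the dimension formula of Theorem~\ref{th:mainDimCalc} for the delicate same-$d$ comparison $\overline{Z(0,l,r;V)}$ versus $Z(l,l,r;V)$. You are if anything more explicit than the paper about the disjointness step needed when $\dim Z(l,l,r;V) = \dim \overline{Z(0,l,r;V)}$; the only place you are terser is the symmetric $2d=n$ case, where one should remark (as the paper does) that the test tuple of Proposition~\ref{pr:tupleExists} may be chosen in either component of $Z(d,d,r;V)$ by selecting $W$ in the appropriate component of $\Gr(d,d;V)$.
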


  According to Proposition \ref{pr:irreducibility}, each closed subvariety appearing in the cover in \eqref{eq:7} is
  irreducible, with the exception of $Z(d,d,r;V)$ when $2d = n$, in which case there are two
  irreducible components.

\begin{proof}
  There is a cover of $Z(r; V)$ by the subsets $Z(d,r;V)$ as $d$ ranges from $1$ to $n-1$. There
  are equalities of subsets $Z(d,r;V)=Z(n-d,r;V)$, so we consider only $d$ such that $2d \le n$.

  The subsets $Z(d,r;V)$ of $Z(r,V)$ are covered by locally closed subvarieties of $\End_k(V)^r$:
  \[ Z(d,r;V) = \bigcup_{l=0}^d Z(l,d,r;V), \]
  and we know that if $l \ge 1$, then $Z(l,d,r;V) \subseteq Z(l,l,r;V)$ (Proposition~\ref{pr:inclusionInZll}). Therefore, the
  union
  \[ \bigcup_{d=1}^{[n/2]} Z(d,d,r;V) \cup \bigcup_{d\in I} \overline{ Z(0, d, r ;V)} \]
  is sufficient to cover $Z(r;V)$. Moreover, $Z(d,d,r;V) =Z(\ge \! d,d,r;V)$ is a closed subvariety of $\End_k(V)^r$,
  and so the cover in \eqref{eq:7} is a cover by closed subvarieties.

  To show that the irreducible components of the varieties appearing in \eqref{eq:7} are the irreducible components of
  $Z(r;V)$, we use Lemma \ref{lem:irredSubsFinite}. Since each variety appearing consists of one or two irreducible
  components, it is sufficient to establish that no irreducible component of any of the closed varieties on the right
  hand side of \eqref{eq:7} is contained in any other. We prove this by checking cases.

  No irreducible component of a variety $Z(d,d,r; V)$ is contained in $Z(d' , r ; V)=Z(\ge 0 ,d' ,r; V)$ if $d'\neq
  d$. This can be seen from Proposition \ref{pr:tupleExists}, since for any $d \in \{1, \dots, [n/2]\}$, and any
  component of $Z(d,d,r;V)$, there exists an $r$-tuple $A_\bullet$ in that component and a $d$-dimensional totally
  isotropic subspace $W$ such that $W$ is the only proper $\rho$-invariant subspace of $A$. Consequently, no component
  of $Z(d,d,r;V)$ is contained in $Z(d',d', r; V) \cup \overline{Z(0, d',r;V)}$ when $d' \neq d$.
  
  The dimension of $Z(d,d,r ;V)$ is not less than that of $Z(0,d,r;V)$, by Theorem \ref{th:mainDimCalc}. This implies
  that no component of $Z(d,d,r;V)$ is contained in the irreducible closed variety $\overline{Z(0,d,r;V)}$.
    
  The variety $Z(0,d,r;V)$ is not contained in $Z(d',d',r;V)$ for any value of $d \in I$ and
  $d' \in \{1, \dots, [n/2]\}$. This can be seen from Proposition \ref{pr:tupleExists}.
 
  For any $(d,d') \in I^2$ where $d \neq d'$, the variety $Z(0,d,r;V)$ is not contained in the closed variety
  $Z(d' ,r ;V)$. This can be seen again from Proposition \ref{pr:tupleExists}, because there exists an $r$-tuple
  $A_\bullet$ and a $d$-dimensional anisotropic subspace $W$ such that the proper $\rho$-invariant subspaces for $A$ are
  $W$ and $W^\perp$. If $\dim W^\perp \in I$, it must be the case that $n-d \ge n/2$, whereupon $d = n/2 = n-d$.

  Since $\overline{Z(0, d', r; V)}$ is contained in the closed set $Z(d',r;V)$, this establishes that
  $\overline{Z(0,d,r, V)}$ is not contained in $\overline{Z(0,d',r,V)}$ when $d \neq d'$.
\end{proof}

\begin{proposition} \label{pr:dimSymmNot4} Suppose $\<{\cdot, \cdot}$ is symmetric and that $n =\dim_k V \ge 2$. For a
  given $r$, the maximal dimension of $Z(l,d,r;V)$ is attained by and only by
 \begin{enumerate}
 \item $\dim Z(0,1,1;V)=\dim Z(1,1,1;V)=\dim Z(2,2,1;V) = 13$ if $n=4$ and $r=1$;
 \item $\dim Z(2,2,r;V) = 12r +1$ if $n=4$ and $r \ge 2$;
 \item $\dim Z(0,1,1;V) = \dim Z(1,1,1;V) =n^2-n+1$ if $n \neq 4$ and $r=1$;
 \item $\dim Z(1,1,r;V) = n-2 + r(n^2-2n+3)$ if $n \neq 4$ and $r \ge 2$.
 \end{enumerate}
\end{proposition}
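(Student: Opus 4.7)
The plan is to apply the formula from Theorem~\ref{th:mainDimCalc},
\[ f(l, d) := \dim Z(l, d, r; V) = d(n - d) - \frac{l^2 + l}{2} + r\bigl((n-d)^2 + d^2 + l^2\bigr), \]
and to maximize $f$ over the integer region $\{(l, d) : 1 \le d \le [n/2],\ 0 \le l \le d\}$ in two stages.

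The first stage maximizes $f(l, d)$ over $l$ with $d$ fixed. The coefficient of $l^2$ is $r - \tfrac12 > 0$, so $l \mapsto f(l, d)$ is convex, and its maximum on $\{0, 1, \dots, d\}$ is attained at one of the endpoints $l = 0$ or $l = d$. A direct computation gives $f(d, d) - f(0, d) = \tfrac{d((2r-1)d - 1)}{2}$, which is nonnegative and vanishes only when $(r, d) = (1, 1)$. Hence $\max_l f(l, d) = f(d, d) =: g(d)$, attained uniquely at $l = d$ except in the exceptional case $r = 1, d = 1$, where it is also attained at $l = 0$.

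The second stage maximizes $g(d)$ over $d \in \{1, \dots, [n/2]\}$. Expanding, $g$ is a convex quadratic in $d$ with positive leading coefficient $\tfrac{3(2r-1)}{2}$, so the integer maximum is at an endpoint. Algebraic manipulation yields
\[ g(1) - g(d) = (d - 1) \cdot \frac{(2r-1)(2n - 3d - 3) + 1}{2}. \]
Evaluating this at $d = [n/2]$ with a case split on the parity of $n$ shows the difference is strictly positive unless $n = 4$, where it becomes $g(1) - g(2) = 1 - r$ (so $g(1) = g(2)$ for $r = 1$ and $g(1) < g(2)$ for $r \ge 2$); the range is degenerate for $n \in \{2, 3\}$ since only $d = 1$ is available.

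Combining the two stages and substituting the explicit values $f(1, 1) = n - 2 + r(n^2 - 2n + 3)$ and, when $n = 4$, $f(2, 2) = 12r + 1$ recovers all four cases of the proposition, including uniqueness of the attainers. The only subtlety, rather than a genuine obstacle, is bookkeeping the tied maxima in the $r = 1$ case: the symmetry of $l \mapsto f(l, d)$ about $l = \tfrac12$ forces $f(0, d) = f(1, d)$ for every $d$, so the $l$-maximum at $d = 1$ is tied between $l = 0$ and $l = 1$; and when additionally $n = 4$, the $d$-maximum is tied between $d = 1$ and $d = 2$, accounting for the three attainers in case (1).
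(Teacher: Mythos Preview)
Your proof is correct and follows essentially the same strategy as the paper: use the explicit formula of Theorem~\ref{th:mainDimCalc}, exploit convexity to reduce to the diagonal $l=d$ (together with the boundary case $l=0$, $d=1$), and then compare the two endpoints $d=1$ and $d=[n/2]$ of the resulting convex quadratic in $d$.

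The only notable difference is the order in which you optimize. The paper first fixes $l$ and uses Lemma~\ref{pr:dstrdec} (strict monotonicity in $d$) to force $d=\max\{l,1\}$, whereas you first fix $d$ and use convexity in $l$ to force $l\in\{0,d\}$; both routes arrive at the same reduced problem. Your version has the advantage of giving explicit factored expressions for $f(d,d)-f(0,d)$ and $g(1)-g(d)$, which makes the endpoint comparison and the uniqueness bookkeeping transparent, where the paper simply appeals to a ``tedious, but elementary, verification'' for $n\ge 5$.
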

\begin{proof}
  There is no loss of generality to restricting to $d \le n/2$, since $Z(l,d,r;V) = Z(l,n-d,r;V)$.
  
  In light of Lemma \ref{pr:dstrdec}, the maximal dimension among all possible $Z(l,d,r;V)$ can be attained
  only when $d=l$, or when $l=0$ and $d=1$.

  When $d=l$, the calculation of Theorem \ref{th:mainDimCalc} gives
  \[ \dim Z(d,d,r;V) = d(n-d) -\frac{d^2+d}{2} + r ((n-d)^2 + 2d^2 ) \]
  which is a quadratic function of $d$. By direct calculation, the leading coefficient is $(3r - 3/2)$, which is positive, and
  so the maximal dimension is attained at one or other endpoint of the interval under consideration, i.e., when $d=1$ or
  $d=[n/2]$. A tedious, but elementary, verification shows that $\dim_k Z(1,1,r; V) > \dim_k([n/2], [n/2], r; V)$ for all values of
  $r$ when $n \ge 5$. This yields the maximum value:
  \begin{equation}
    \dim Z(1,1,r; V) = (n-2) + r(n^2-2n+3) \quad \text{provided $n\neq 4$.}\label{eq:8}
  \end{equation}

  When $n=2,3$, there is only one value of $d$ to consider, namely $d=1$. When $n=4$, we calculate
  \[ \dim Z(2,2,r;V) = 12r +1, \quad \dim Z(1,1,r; V) = 11r + 2. \]
  
  When $l =0$ and $d=1$, the calculation of Theorem \ref{th:mainDimCalc} says
  \begin{equation}
    \dim Z(0,1,r;V) =  (n-1)  + r n^2 - 2rn + 2r\label{eq:9}
  \end{equation}
  which is strictly less than \eqref{eq:8} unless $r=1$, whereupon they are equal.

  Combining all the above gives the result.
\end{proof}

\begin{remark}
  We remark that in the case of an orthogonal involution, the maximal dimensions of the irreducible components are
  \begin{enumerate}
  \item $12r +1$ if $n=4$;
 \item $r(n^2-2n+3) + n -2$ if $n \neq 4$.
  \end{enumerate}
\end{remark}


Recall that $U(r;V) \subseteq \End_k(V)$ was defined to be the set of $r$-tuples of endomorphisms that generate
$\End_k(V)$ as an algebra with involution.

\begin{corollary} \label{cor:SymmDenseOpen}
 Suppose $\<{\cdot, \cdot}$ is symmetric and $r \ge 1$. Then $U(r;V) \subseteq \End_k(V)^r$ is a dense open subvariety.
\end{corollary}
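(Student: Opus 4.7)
The plan is to reduce density to a dimension inequality and then to read off the dimensions from the results already established in the section. We have $U(r;V) = \End_k(V)^r \setminus Z(r;V)$. By Proposition~\ref{pr:Zclosed}, $Z(r;V)$ is a closed subvariety of $\End_k(V)^r$, so $U(r;V)$ is open. Since $\End_k(V)^r \cong \A_k^{rn^2}$ is irreducible, density of the open subset $U(r;V)$ is equivalent to the strict inequality $\dim Z(r;V) < rn^2$, or to the statement that $U(r;V)$ is nonempty.

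The case $n=1$ is trivial, because then $\End_k(V) = k$ is spanned by $1$ and every $r$-tuple generates $\End_k(V)$ as an algebra with involution. For $n \ge 2$, I would simply invoke Proposition~\ref{pr:dimSymmNot4}, which gives the maximal dimension of the components of $Z(r;V)$ in the symmetric case in each of four exhaustive subcases, and compare it to $rn^2 = \dim_k \End_k(V)^r$.

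The four comparisons, each a short arithmetic check, are:
\begin{itemize}[label={--}]
\item $n=4,\ r=1$: $rn^2 - \dim Z(r;V) \ge 16 - 13 = 3 > 0$.
\item $n=4,\ r\ge 2$: $rn^2 - \dim Z(r;V) \ge 16r - (12r+1) = 4r - 1 > 0$.
\item $n \neq 4,\ r=1$: $rn^2 - \dim Z(r;V) \ge n^2 - (n^2 - n + 1) = n - 1 > 0$, using $n \ge 2$.
\item $n \neq 4,\ r \ge 2$: $rn^2 - \dim Z(r;V) \ge rn^2 - (n-2) - r(n^2-2n+3) = r(2n-3) - (n-2) \ge 2(2n-3) - (n-2) = 3n - 4 > 0$, again using $n \ge 2$.
\end{itemize}

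In every case the inequality is strict, so $Z(r;V)$ is a proper closed subvariety of the irreducible variety $\End_k(V)^r$, and hence $U(r;V)$ is a dense open subvariety. There is no substantive obstacle here; the work was done in assembling the dimension calculation in Proposition~\ref{pr:dimSymmNot4}, and the corollary is a direct bookkeeping consequence.
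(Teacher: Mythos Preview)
Your proof is correct and follows essentially the same approach as the paper: openness from Proposition~\ref{pr:Zclosed}, then density by comparing the maximal dimension of $Z(r;V)$ from Proposition~\ref{pr:dimSymmNot4} against $rn^2$. You supply the explicit arithmetic checks and the trivial $n=1$ case, which the paper leaves implicit, but the argument is the same.
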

\begin{proof}
 Since $U(r;V)$ is the complement of the closed subvariety $\bigcup_{d \le [n/2]} Z(\ge l,d,r; V)$, it is
 open. The maximal dimension of
 \[ Z(\ge l, d, r; V) = \bigcup_{l=0}^{d} Z(l , d,r ;V) \]
 as $d$ ranges from $1$ to $[n/2]$ and $l$ from $0$ to $d$ is given in Proposition \ref{pr:dimSymmNot4}, and in each
 case is strictly less than $rn^2 = \dim \End_k(V)$.
\end{proof}

\begin{proposition} \label{pr:dimSkew-Symmetric}
 Suppose $n = \dim V$ is even, and $\phi$ is skew-symmetric. The maximal dimension of $Z(l,d,r;V)$ is attained
 by and only by
 \begin{enumerate}
 \item $\dim Z(2,2,r; V) = 12r+3$ when $n=4$;
 \item $\dim Z(3,3,r; V) = 27r + 6$ when $n=6$;
 \item $\dim Z(4,4, 1; V)=\dim Z(1,1,1;V) = 58$ when $n=8$ and $r=1$;
 \item $\dim Z(1,1,r;V) = (n-1)^2r +2r+n-1$ in all other cases.
 \end{enumerate}
\end{proposition}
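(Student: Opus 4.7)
The plan is to follow the structure of Proposition \ref{pr:dimSymmNot4}, adapted to the skew-symmetric setting. By Lemma \ref{pr:dstrdec}, for any fixed $l$ the function $d \mapsto \dim Z(l,d,r;V)$ is strictly decreasing in $d$. Since Theorem \ref{th:mainDimCalc} in the skew-symmetric case requires $d \equiv l \pmod 2$, for each fixed $l$ the maximum is attained at the smallest admissible $d$: namely $d = l$ when $l \ge 1$, and $d = 2$ when $l = 0$. The candidates for the overall maximum are therefore the diagonal varieties $Z(d,d,r;V)$ for $1 \le d \le n/2$, together with $Z(0,2,r;V)$.

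First I would analyze $h(d) := \dim Z(d,d,r;V)$ as a function of a real variable $d$ via the formula of Theorem \ref{th:mainDimCalc}. A direct computation gives $h''(d) = 3(2r-1) > 0$, so $h$ is strictly convex on $[1, n/2]$ and its maximum is attained at one of the two endpoints. Substituting yields
\[ h(1) = r(n-1)^2 + 2r + (n-1), \qquad h(n/2) = \tfrac{n(n+2)}{8} + \tfrac{3rn^2}{4}, \]
and after simplification
\[ h(1) - h(n/2) = \frac{(n-2)\bigl[2r(n-6) - (n-4)\bigr]}{8}. \]

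Next I would compare these two endpoint values against $g := \dim Z(0,2,r;V) = (2n-4) + r((n-2)^2+4)$ and verify by a short calculation that $g$ is dominated by at least one of $h(1)$ or $h(n/2)$ whenever $n \ge 4$ and $r \ge 1$. With $g$ eliminated, the sign of the displayed expression for $h(1) - h(n/2)$ cleanly separates into the four regimes of the proposition: strictly negative for $n \in \{4,6\}$, giving cases (1) and (2); zero precisely when $(n,r) = (8,1)$, giving case (3) in which $Z(1,1,1;V)$ and $Z(4,4,1;V)$ both attain dimension $58$; and strictly positive in the remaining cases, giving case (4) (which also absorbs the edge case $n=2$, where $d=1$ is the only admissible value).

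The main obstacle is not conceptual but is purely bookkeeping, concentrated at the transition $(n,r) = (8,1)$, where the two endpoint candidates coincide in dimension and both must be identified as top-dimensional components of $Z(r;V)$. Apart from this delicate comparison, no geometric input beyond Theorem \ref{th:mainDimCalc} and Lemma \ref{pr:dstrdec} is required.
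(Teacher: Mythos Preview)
Your proof is correct and follows essentially the same approach as the paper: reduce via Lemma~\ref{pr:dstrdec} to the smallest admissible $d$ for each $l$, then analyze $h(d)=\dim Z(d,d,r;V)$ as a strictly convex quadratic and compare the endpoint values $h(1)$ and $h(n/2)$. Your closed-form expression $h(1)-h(n/2)=\tfrac{(n-2)[2r(n-6)-(n-4)]}{8}$ and the resulting case split agree with the paper's conclusions.

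The one point where you are actually more careful than the paper is the $l=0$ case. The paper dismisses it by observing that $Z(0,1,r;V)$ is empty in the skew-symmetric setting and then proceeds directly to the diagonal $d=l$. Strictly speaking this leaves $Z(0,2,r;V)$ unaddressed, since the parity constraint $d\equiv l\pmod 2$ makes $d=2$ the smallest admissible value when $l=0$. You explicitly identify $g=\dim Z(0,2,r;V)$ as a candidate and verify $g<h(1)$ for all $n\ge 4$, $r\ge 1$ (indeed $h(1)-g=(2n-5)r-(n-3)\ge n-2>0$), which closes that gap cleanly.
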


\begin{proof}
  There is no loss of generality to restricting to $d \le n/2$, since $Z(l,d,r;V) = Z(l,n-d,r;V)$.
  
  In light of Lemma \ref{pr:inclusionInZll}, the maximal dimension among all possible $Z(l,d,r;V)$ can be attained
  only when $d=l$, or when $l=0$ and $d=1$. Since $\<{\cdot,\cdot}$ is skew-symmetric, the case $l=0$ and $d=1$ cannot
  arise. 

  When $d=l$, the calculation of Theorem \ref{th:mainDimCalc} gives
  \[ \dim Z(d,d,r;V) = d(n-d) -\frac{d^2-d}{2} + r ((n-d)^2 + 2d^2 ) \]
  which is a quadratic function of $d$. By direct calculation, the leading coefficient is $(3r - 3/2)$, which is positive, and
  so the maximal dimension is attained at one or other endpoint of the interval under consideration, i.e., when $d=1$ or
  $d=[n/2]$. Elementary verification shows that $\dim_k Z(1,1,r; V) > \dim_k([n/2], [n/2], r; V)$ for all values of
  $r$ when $n \ge 10$. When $n=2$, there is only one case, that of $d=1$. The remaining three cases, $n=4,6,8$, are straightforward.
\end{proof}


\begin{corollary} \label{cor:baldDimensionCountsSkew}
  Suppose the bilinear form on $V$ is skew-symmetric. The maximal
  dimension of an irreducible component of $Z(r;V)$ is
  \begin{enumerate}
  \item $12r +3$ if $n=4$;
    \item $27r + 6$ if $n=6$;
 \item $n^2 r - 2nr + 3r + n - 1$ if $n =2$ or $n \ge 8$.
 \end{enumerate}
\end{corollary}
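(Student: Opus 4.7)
The plan is to combine the structural result Theorem~\ref{th:closedCoverOfZsymm} with the dimension computation in Proposition~\ref{pr:dimSkew-Symmetric}. Theorem~\ref{th:closedCoverOfZsymm} states that the irreducible components of $Z(r;V)$ coincide with the irreducible components of the closed subvarieties
\[ Z(d,d,r;V), \quad d \in \{1,\dots,[n/2]\}, \qquad \overline{Z(0,d,r;V)}, \quad d \in I, \]
where in the skew-symmetric case $I$ is the set of even $d$ with $2d \le n$. Since the form is skew-symmetric, Proposition~\ref{pr:irreducibility} tells us that each $Z(l,d,r;V)$ is irreducible, and hence each member of this finite cover is itself irreducible. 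Thus the maximal dimension of any irreducible component of $Z(r;V)$ is the maximum, over admissible $(l,d)$, of $\dim Z(l,d,r;V)$.

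First I would invoke Proposition~\ref{pr:dimSkew-Symmetric}, which identifies exactly which $Z(l,d,r;V)$ attain this maximum in each case. For $n=4$ it is $Z(2,2,r;V)$ of dimension $12r+3$; for $n=6$ it is $Z(3,3,r;V)$ of dimension $27r+6$; and in all remaining cases (i.e.\ $n=2$ or $n\ge 8$) it is $Z(1,1,r;V)$ of dimension $(n-1)^2 r + 2r + n - 1$.

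It then suffices to verify that the last expression equals $n^2r - 2nr + 3r + n - 1$, which is immediate by expanding $(n-1)^2 r = n^2 r - 2nr + r$. The edge case $n=8$, $r=1$, where Proposition~\ref{pr:dimSkew-Symmetric} notes that both $Z(4,4,1;V)$ and $Z(1,1,1;V)$ attain the common value $58$, is already consistent with the formula $n^2 r - 2nr + 3r + n - 1 = 64 - 16 + 3 + 7 = 58$, so no separate case need be recorded in the corollary. The $n=2$ case is likewise consistent: there $I = \emptyset$, the unique component is $Z(1,1,r;V)$ of dimension $3r + 1$, matching the formula.

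There is essentially no obstacle here: the corollary is a direct bookkeeping consequence of the two inputs, and the only real content is checking the algebraic identity $(n-1)^2 r + 2r + n - 1 = n^2 r - 2nr + 3r + n - 1$ and confirming that the small-$n$ exceptions coming from Proposition~\ref{pr:dimSkew-Symmetric} reproduce the stated values $12r+3$ and $27r+6$.
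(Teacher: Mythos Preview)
Your proposal is correct and matches the paper's approach: the paper states this corollary with no proof, treating it as an immediate consequence of Proposition~\ref{pr:dimSkew-Symmetric} together with Theorem~\ref{th:closedCoverOfZsymm}. Your write-up simply makes explicit the bookkeeping the paper leaves implicit, including the algebraic identity $(n-1)^2 r + 2r + n - 1 = n^2 r - 2nr + 3r + n - 1$ and the edge-case checks.
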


\begin{corollary} \label{cor:SkewSymmDenseOpen}
 Suppose $\<{\cdot, \cdot}$ is symmetric and $r \ge 1$. Then $U(r;V) \subseteq \End_k(V)^r$ is a dense open subvariety
 unless $n =2$ and $r=1$.
\end{corollary}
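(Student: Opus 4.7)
The plan is to follow the template of Corollary \ref{cor:SymmDenseOpen} verbatim, but with the dimension bounds now coming from Corollary \ref{cor:baldDimensionCountsSkew} instead of Proposition \ref{pr:dimSymmNot4}. (We read the hypothesis as $\<{\cdot,\cdot}$ \emph{skew-symmetric}, the symmetric case having already been handled in Corollary \ref{cor:SymmDenseOpen}.) By definition $U(r;V) = \End_k(V)^r \setminus Z(r;V)$, and $Z(r;V)$ is closed in $\End_k(V)^r$ by Proposition \ref{pr:Zclosed}, so openness is automatic. It remains to verify density, i.e., that $\dim Z(r;V) < \dim \End_k(V)^r = n^2 r$ in every case except $(n,r)=(2,1)$.

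First I would dispense with the two sporadic dimensions. In the case $n=4$, the bound is $12r+3 < 16r$, which holds for every $r \ge 1$; in the case $n=6$, it is $27r+6 < 36r$, which likewise holds for every $r \ge 1$. So in these two dimensions density is immediate.

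Then I would treat the generic case, $n=2$ or $n \ge 8$, where Corollary \ref{cor:baldDimensionCountsSkew} gives $\dim Z(r;V) = n^2 r - 2nr + 3r + n - 1$. The inequality $\dim Z(r;V) < n^2 r$ reduces to
\[ r(2n-3) > n-1. \]
For $n \ge 8$ the left-hand side is at least $2n-3 \ge 13$, and $n-1 \le 2n-3$, so the inequality holds for all $r \ge 1$. For $n=2$ the inequality reads $r > 1$, so it holds precisely when $r \ge 2$ and fails when $r=1$.

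The only subtlety is to observe that when $(n,r)=(2,1)$ the inequality becomes an equality, so the dense-open argument genuinely breaks; in fact this reflects the well-known phenomenon that for the symplectic involution on $\Mat_{2\times 2}(k)$ one has $\rho(A) = \tr(A)\cdot I - A$, so the involutive subalgebra generated by a single matrix is already commutative and no singleton can generate $\End_k(V)$, i.e.\ $U(1;V) = \emptyset$. No part of the argument looks like a real obstacle: it is a mechanical check of the inequality $\dim Z(r;V) < n^2 r$ against the dimensions already tabulated in Corollary \ref{cor:baldDimensionCountsSkew}.
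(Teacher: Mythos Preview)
Your proposal is correct and follows essentially the same approach as the paper: the paper's own proof is the single sentence ``The argument is essentially the same as in Corollary \ref{cor:SymmDenseOpen},'' and you have simply spelled out that argument in full, checking the inequality $\dim Z(r;V) < n^2 r$ against the bounds in Corollary \ref{cor:baldDimensionCountsSkew}. Your added remark on the $(n,r)=(2,1)$ case also matches the Remark immediately following the corollary in the paper.
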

\begin{proof}
 The argument is essentially the same as in Corollary \ref{cor:SymmDenseOpen}.
\end{proof}

\begin{remark}
  In the case of $n=2$ and a skew-symmetric bilinear form, it is possible to write (up to algebra isomorphism)
  $\rho(A) = \Tr(A) I_2 - A$, so that the subalgebra with involution generated by any single element is
  commutative. Therefore $\End_k(V)$ cannot be generated by one element in this case.
\end{remark}

\emergencystretch=1em
\printbibliography

\end{document}
